\newtheorem{theorem}{Theorem}[section]
\newtheorem{lemma}[theorem]{Lemma}
\newtheorem{prop}[theorem]{Proposition}
\theoremstyle{definition}
\newtheorem{definition}[theorem]{Definition}
\newtheorem{remark}[theorem]{Remark}
\newcommand{\be}{\begin{equation}}
\newcommand{\ee}{\end{equation}}
\newcommand{\lt}{\left}
\newcommand{\rt}{\right}
\newcommand{\al}{\alpha}
\newcommand{\ubar}{\underline}
\def\dfrac{\displaystyle\frac}
\newcommand{\td}{\tilde}
\newcommand{\p}{\partial}
\newcommand{\R}{\mathbb{R}}
\newcommand{\M}{\mathcal{M}}
\newcommand{\K}{\mathcal{K}}
\newcommand{\us}{u^*}
\newcommand{\lu}{\ubar{u}}
\newcommand{\uu}{\bar{u}}
\newcommand{\ga}{\gamma}
\newcommand{\gas}{\gamma^*}
\newcommand{\T}{\partial}
\newcommand{\w}{w^*}
\newcommand{\s}{\sigma}
\newcommand{\ka}{\kappa}
\newcommand{\goto}{\rightarrow}
\newcommand{\bn}{\bar{\nabla}}
\newcommand{\tbus}{\tilde{\bar{u}}^*}
\numberwithin{equation}{section}
\begin{document}
	
	\title[Translating solutions in Minkowski space]{Translating solutions and the entire Hessian curvature flow in Minkowski space }
	
	
	\author{Changzheng Qu}
	\address{School of Mathematics and Statistics, Ningbo University, Ningbo, China}
	\curraddr{}
	\email{quchangzheng@nbu.edu.cn}
	\thanks{}
	
	\author{Zhizhang Wang}
	\address{School of Mathematical Sciences, Fudan University, Shanghai, China.}
	\curraddr{}
	\email{zzwang@fudan.edu.cn}
	\thanks{}
	
	\author{Weifeng Wo}
	\address{School of Mathematics and Statistics, Ningbo University, Ningbo, China}
	\curraddr{}
	\email{woweifeng@nbu.edu.cn}
	\thanks{}

	\thanks{The first and third authors are supported by NSFC Grant No. 12431008. The second author is supported by NSFC Grants No.12141105. }
	
	
	\begin{abstract}
In this paper, we study the $k$-Hessian curvature flow of noncompact spacelike hypersurfaces in Minkowski space. We first prove the existence of translating solutions with given asymptotic behavior. Then, we prove that for strictly convex initial hypersurface satisfying certain conditions, the curvature flow exists for all time, and the normalized flow converges to a translating solution.
\end{abstract}

	\keywords{}

	\dedicatory{}
	
	\maketitle
\section{Introduction}

Let $\R^{n, 1}$ be the Minkowski space with the Lorentzian metric
\[ds^2=\sum\limits_{i=1}^ndx_i^2-dx_{n+1}^2.\]
We  denote the corresponding inner product by $\langle \cdot,\cdot\rangle$.
A spacelike hypersurface $\M\subset\R^{n, 1}$ always has an everywhere timelike normal field, which we assume to be future directed and to satisfy the condition
$\lt<\nu, \nu\rt> =-1.$ Such hypersurfaces can be locally expressed as the graph of a function $u(x): \R^n\rightarrow\R$ satisfying $|Du(x)|< 1$ for all $x\in\R^n.$ Thus, the position vector of $\M$ should be
$X(x)=(x, u(x))$ for any $x\in \mathbb{R}^n$.

Inspired by the construction of the constant curvature hypersurfaces \cite{WX20},  in the papers \cite{WX22-1,WX22-2}, Wang and Xiao studied the Hessian curvature self-expanders and the Hessian curvature flows of entire spacelike hypersurfaces in Minkowski space. Specifically, for any given positive function $\varphi$ defined on the unit sphere, they proved in \cite{WX22-1} that it is always possible to find some entire spacelike hypersurface $\M$ that satisfies the self-expander equation
\begin{eqnarray}
\sigma_k^{1/k}(\kappa[\M])=-\lambda \langle X, \nu\rangle,  \text{ for some constant } \lambda>0
\end{eqnarray}
with the asymptotic behavior
\begin{eqnarray}
u(x)\rightarrow |x|+\varphi\left(\frac{x}{|x|}\right), \text{ as } |x|\rightarrow +\infty.
\end{eqnarray}
Here $\sigma_k$ is the $k$-th elementary symmetric polynomial, i.e.,
\[\sigma_k(\kappa)=\sum\limits_{1\leq i_1<\cdots<i_k\leq n}\kappa_{i_1}\cdots\kappa_{i_k},\]
where $\kappa_i$ is the $i$th principle curvature of $\M$.

Later, in \cite{WX22-2}, they proved that for any given positive function $\varphi$ defined on the unit sphere, if the initial spacelike entire strictly convex  hypersurface $\M_0$ satisfies the condition
$$0 < \sigma_k^{1/k}(\kappa[\M_0]) < C\langle X,\nu\rangle$$
for some large positive constant $C$, then there exists a family of spacelike hypersurfaces that form a solution to the $\sigma_k^{1/k}$
curvature flow. Specifically, for each $t > 0$, $X(\mathbb{R}^n, t) = \M_t$
is an entire spacelike hypersurface embedded in $\mathbb{R}^{n,1}$
, and $X(\cdot, t)$ satisfies
\begin{eqnarray}\label{flow}
	\left\{
	\begin{aligned}
		 	\frac{\partial X(p,t)}{\partial t}&=\left(\sigma_k(\kappa[\M_t])(p,t)\right)^{1/k}\nu(p,t), \\
		 X(\cdot, 0)&=X_0
	\end{aligned}
	\right.
\end{eqnarray}
where
 \[X_0: \R^n\goto\R^{n, 1}\] is an embedding with $X_0(\R^n)=\M_0$ and  $\nu(\cdot, t)$ is the unit upward normal of $\M_t$.

Since each $\M_t$ is graphic,  we can rewrite the parabolic equation \eqref{flow} using $X(p,t)=(p,u(p,t))$ as follows:
\begin{eqnarray}\label{flow1}
	\left\{
	\begin{aligned}
			\frac{\partial u(x,t)}{\partial t}&=\sqrt{1-|Du|^2}\left(\sigma_k(\kappa[\M_t])\right)^{1/k}, \\
		u(\cdot, 0)&=u_0,
	\end{aligned}
	\right.
\end{eqnarray}
where $X_0(x)=(x, u_0(x))$. For our convenient, we slightly modify the above flow:
\begin{eqnarray}\label{ge}
	\left\{
	\begin{aligned}
			\frac{\partial u(x,t)}{\partial t}&=\sqrt{1-|Du|^2}\left(\frac{\sigma_k(\kappa[\M_t])}{C_n^k}\right)^{1/k}, \\
		u(\cdot, 0)&=u_0,
	\end{aligned}
	\right.
\end{eqnarray}
where $C_n^k$ is the binomial coefficient.

The curvature flow problem in Euclidean space has been extensively studied in the literature. For the mean curvature flow, Huisken \cite{Hui} proved that if the initial hypersurface $\M_0$ is smooth, closed, and strictly convex, then the flow exists on a finite time interval $0\leq t\leq T,$ and the $\M_t$ converges to a point as $t\goto T.$ Moreover, with  suitable rescaling, it has been shown that the normalized hypersurfaces converge to a sphere. A similar result was obtained by Chow \cite{Chow} for the $n$-th root of the Gauss curvature flow. In \cite{And}, Andrews generalized Huisken's result via the Gauss map to a large family of curvature flows including the $k$-th root of the $\sigma_k$ curvature flow. For entire graphical flows,  Daskalopoulos-Huisken \cite{DH} studied the inverse mean curvature flow and  Choi-Daskalopoulos-Kim-Lee \cite{CDKL} studied the Gauss curvature flow.

In Minkowski space,  Andrews, Chen, Fang and McCoy \cite{ACFMc} proved a similar result as \cite{And} under the assumption that the initial hypersurface $\M_0$ is spacelike, co-compact, and strictly convex.  Aarons \cite{Aa} studied the mean curvature flow with a forcing term in Minkowski space. Bayard-Schn\"{u}rer \cite{BS09} and Bayard \cite{B09} studied the Gauss curvature flow and scalar curvature flow with a forcing term in Minkowski space. 

Translating solution arises in the study of singularities of curvature flows. These solutions not only provide insight into the structure of singularities but also appear as blow-up limits in various flow problems. Huisken and Sinestrari \cite{HS1,HS2} proved that convex translating solitons in mean curvature flow within Euclidean space are associated with type II singularities. Altschuler and Wu \cite{AW94} analyzed translating solutions for the non-parametric mean curvature flow with prescribed contact angles.
It was shown by Wang \cite{Wang} that for  $n=2$, convex translating solutions must be rotationally symmetric, while for  $n\geq3$, he constructed non-rotationally symmetric, complete convex translating solitons.

Complete noncompact  singularities of curvature flows in Euclidean space have been studied extensively (\cite{ CDL21, CSS07, HIMW19, SW10, SX20, Urb98}). However, few results are known in Minkowski space.For mean curvature flow in Minkowski space, Ecker \cite{Eck} constructed a radially symmetric solution to equation (1.3), which was later analyzed in detail by Jian \cite{Jian}. Spruck and Xiao \cite{SX16} explored entire downward translating solitons, while Bayard \cite{Bay} extended the study of scalar curvature flow by examining entire downward solitons. Additionally, Ding \cite{Ding} investigated entire smooth convex strictly spacelike translating solitons for mean curvature flow in Minkowski space.

This leads to a natural question: apart from self-expanders, can other singularities be identified as limits of the Hessian curvature flow in Minkowski space? In this paper, we explore translating solutions and the corresponding deformation procedure from a strictly convex hypersurface to a given translator via the Hessian curvature flow.

A translating solution is of the form $$u(x,t)=u(x)+at,$$ where $a>0$ is the velocity. Substituting this form  into \eqref{ge}, we obtain the equation for translating solutions:
\begin{equation}\label{teq}
	\frac
	{a}{\sqrt{1-|Du(x)|^2}}=\left(\frac{\sigma_k(\kappa[\M])}{C_n^k}\right)^{1/k},
\end{equation}
Denote $E=\frac{\partial}{\partial x_{n+1}}$. We now provide an explicit definition of translation solution for $k$- Hessian curvature flow.
\begin{definition}
A $k$-convex hypersurface $\M_u$ is called a translating solution of the flow \eqref{flow}, if it satisfies the equation
$$\sigma_k^{\frac{1}{k}}(\kappa[\M_u])=-\lambda \lt<\nu_u, E\rt>, \text{ for some constant }\lambda>0.$$
Here, $X=(x,u(x))$ is the position vector of $\M_u$ and $\nu_u$ is the future timelike unit normal of $\M_u$.\end{definition}

The first main result of this paper is

\begin{theorem}
\label{es-th1}
Suppose $\varphi$ is a $C^2$ function defined on the unit sphere.
Then there exists a unique, strictly convex solution $u:\mathbb{R}^n\rightarrow\mathbb{R}$ to \eqref{teq} such that as $|x|\goto\infty,$ we have
\begin{eqnarray}\label{asymptotic behavior}
u(x)\goto |x|+\varphi\left(\frac{x}{|x|}\right).
\end{eqnarray}
Moreover, the solution exhibits the following asymptotic behavior:
\begin{eqnarray}\label{asymptotic behavior}
u(x)= |x|+\varphi\left(\frac{x}{|x|}\right)+O\left(|x|^{\frac{2n-k^2-k}{k}}e^{-\frac{2n}{k^2}|x|^k}\right),
\end{eqnarray}
as $|x|\rightarrow +\infty$. 
\end{theorem}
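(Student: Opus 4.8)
The plan is to construct the translating solution by an exhaustion argument: solve the Dirichlet problem for \eqref{teq} on large balls $B_R$ with boundary data chosen to match the prescribed asymptotics, obtain uniform interior estimates, and pass to the limit $R\to\infty$. The first step is to fix the velocity. Notice that \eqref{teq} with the normalization $C_n^k$ forces the natural ``cylinder over a ray'' or ``downward paraboloid'' comparison functions; in particular the function $|x|$ (a lightcone) is a subsolution/supersolution benchmark and the equation should be solved with $a$ a free parameter. I would first show that for each $R$ there is a unique $a_R>0$ and a strictly convex $u_R$ solving $\frac{a_R}{\sqrt{1-|Du_R|^2}}=\bigl(\sigma_k(\kappa)/C_n^k\bigr)^{1/k}$ on $B_R$ with $u_R=|x|+\varphi(x/|x|)$ on $\partial B_R$ (say, normalized by $u_R(0)=0$, which then determines $a_R$). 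Existence here is standard elliptic theory for $k$-Hessian type equations once we have a priori estimates; the spacelike condition $|Du_R|<1$ must be preserved, which in Minkowski space is where comparison with the lightcone $|x|$ enters decisively.

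The second step — and the technical heart — is the a priori estimates, uniform in $R$: (i) a gradient estimate giving $|Du_R|\le 1-\delta$ on compact sets, using barriers built from $|x|$ and the explicit rotationally symmetric solutions of Ecker/Jian type; (ii) a $C^0$ bound and a bound on $a_R$ from above and below, again by sandwiching $u_R$ between explicit radial sub- and supersolutions with the correct asymptotic slope $1$; and (iii) a uniform $C^2$ estimate, hence (via Evans--Krylov or the concavity of $\sigma_k^{1/k}$) $C^{2,\alpha}$ interior estimates. Then Arzelà--Ascoli and a diagonal argument produce a limit $(a,u)$ solving \eqref{teq} on all of $\R^n$, strictly convex, spacelike. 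Uniqueness of the solution with the prescribed asymptotics follows from the comparison principle: if $u_1,u_2$ both satisfy \eqref{teq} and \eqref{asymptotic behavior}, then $\sup(u_1-u_2)$ is attained (the difference tends to $0$ at infinity), and at an interior max the concave operator forces $u_1\equiv u_2$ and then the velocities agree.

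The third step is to upgrade the convergence $u(x)\to|x|+\varphi(x/|x|)$ to the sharp rate $O\bigl(|x|^{(2n-k^2-k)/k}e^{-\frac{2n}{k^2}|x|^k}\bigr)$. Here I would linearize \eqref{teq} about the model ``limit hypersurface'' $w(x)=|x|+\varphi(x/|x|)$. Writing $u=w+v$ and inserting into \eqref{teq}, the left side $a/\sqrt{1-|Dw|^2}$ is $O(1)$ while the right side, because $w$ is asymptotically a lightcone, has curvatures $\sigma_k(\kappa[\M_w])$ decaying; matching the two and keeping the leading behaviour shows $v$ satisfies (to leading order) a linear ODE/PDE in the radial variable of the form $v'' + (\text{something like }\frac{n-1}{r})v' - c\,r^{k-1}v \approx 0$ whose decaying solution is precisely the stated WKB-type expression — the exponent $\tfrac{2n}{k^2}$ and the polynomial power $\tfrac{2n-k^2-k}{k}$ come out of the indicial/turning-point analysis of that equation (integrating $\int \sqrt{c}\,r^{(k-1)/2}\,dr$ gives the $r^{(k+1)/2}$ in the exponent — wait, one must be careful: the correct scaling is dictated by how $1-|Dw|^2$ behaves, which for $w\sim|x|$ is like $|\varphi|/|x|$, changing the effective coefficient; tracking this constant precisely is the delicate computation). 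I would make this rigorous by constructing sub- and supersolutions of the form $w \pm K|x|^{\beta}e^{-\mu|x|^k}$ with $\mu=\tfrac{2n}{k^2}$ and $\beta=\tfrac{2n-k^2-k}{k}$, verifying they bracket $u$ via the comparison principle on $\R^n\setminus B_{R_0}$ together with the already-established $C^2$ bounds near $\partial B_{R_0}$.

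The main obstacle I anticipate is this last step: obtaining the \emph{precise} exponential rate (not just some exponential decay) requires a careful asymptotic analysis of the fully nonlinear operator near the lightcone, where the equation degenerates ($|Du|\to1$), and the construction of barriers matching the constants $\mu=\tfrac{2n}{k^2}$ and $\beta=\tfrac{2n-k^2-k}{k}$ to leading order. The uniform $C^2$ estimate in Step 2 is also nontrivial because the domain is noncompact and the equation is degenerate at infinity, so the estimate must be genuinely interior and independent of $R$; I would expect this to rely on the structure of $\sigma_k^{1/k}$ together with a good choice of auxiliary test function adapted to the Minkowski setting, in the spirit of the estimates in \cite{WX22-1, WX22-2}.
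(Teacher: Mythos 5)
Your high-level plan (exhaustion, barriers, interior estimates, comparison) is in the right family, but two concrete steps would not work as written, and the paper's actual route is structured differently in ways that matter.

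\textbf{The Dirichlet problem on $B_R$ is not well-posed with your boundary data.} You propose $u_R=|x|+\varphi(x/|x|)$ on $\partial B_R$. But $|D(|x|+\varphi(x/|x|))|^2 = 1 + |D_{\mathbb{S}^{n-1}}\varphi|^2/|x|^2 \ge 1$, so this boundary data is not strictly spacelike --- it lies on or outside the lightcone. You cannot expect a spacelike solution with such Dirichlet data, and there is no obvious way to rescue this by barriers alone. The paper sidesteps the problem entirely: it works with the Legendre transform and solves the dual Dirichlet problem \eqref{D5} on balls $B_\tau\subset B_1$ in gradient space, with boundary data $(q_1^k)^*$ (the Legendre transform of an actual subsolution, which is automatically admissible). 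The spacelike condition then becomes $|\xi|<1$, which is built into the domain. Relatedly, you treat the velocity $a_R$ as an unknown determined by normalizing $u_R(0)=0$; in the paper $a$ is a fixed parameter normalized to $1$ by a scaling $z_a^k(|x|)=\frac{1}{a}z_1^k(a|x|)$ at the outset, and introducing a free $a_R$ overdetermines the Dirichlet problem once boundary data is prescribed.

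\textbf{The asymptotic rate does not come from linearizing around $w=|x|+\varphi(x/|x|)$.} You propose a WKB analysis of the linearization about $w$, and you yourself flag that you are unsure of the resulting constants. Indeed this is shaky: $w$ does not satisfy the equation (it is a lightcone to leading order, where $\sigma_k$ degenerates), so the linearized operator is singular exactly where the rate is determined, and the naive second-order indicial analysis gives the wrong exponent. The paper's route is cleaner and avoids linearization altogether: in Section 2 it reduces the radial equation to a first-order ODE for $z=(u')^k$, namely $z'=(nr^{k-1}-(n-k)z/r)(1-z^{2/k})$, and the precise rate $1-z \sim C\,r^{2(n-k)/k}e^{-\frac{2n}{k^2}r^k}$ (hence the stated exponent after one integration) falls out of that exact ODE (Proposition~\ref{prop2}). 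Then in Section~3 the sub/supersolutions $q_1^k,q_2^k$ are built as $\sup_y$/$\inf_y$ envelopes of translates of the \emph{exact} radial solution tilted by $p_i(y)=D\varphi(y)\pm 2My$ (following Treibergs and \cite{RWX20}); since each translate is an exact solution, they are genuine barriers and inherit the sharp rate automatically, with no need to verify barrier inequalities for fabricated decaying perturbations of $w$. The $\sup/\inf$ envelope over $y\in\mathbb{S}^{n-1}$ is the key device for encoding the prescribed $\varphi$, and it is missing from your construction. Your uniqueness argument via comparison is fine in spirit (the paper's Lemma~\ref{lem7.2.1} plays this role at the level of Legendre transforms), and your remark on the Pogorelov-type $C^2$ estimate is also in the right direction (cf.\ Lemma~\ref{lc2lem1}).
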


\begin{remark}
According to \cite{ChenQiu}, there are no  complete spacelike mean curvature translating solitons in Minkowski space. Thus, we believe that all translating solutions constructed here are incomplete. This incompleteness may be attributed to the asymptotic behavior of $u$.
\end{remark}

Similar to \cite{WX22-2}, we can establish the following theorem:

\begin{theorem}\label{theo1}
Suppose $\varphi\in C^2(\mathbb{S}^{n-1}),$ and the initial entire spacelike strictly convex hypersurface $\M_0$ satisfying
\begin{eqnarray}\label{assume}
0<\sigma_k(\kappa[\M_0])\leq -C\langle \nu, E\rangle,
\end{eqnarray}
for some large positive constant $C$, and $u_0(x)-|x|\rightarrow \varphi(x/|x|)$ as $|x|\rightarrow \infty$. Then, for any $0<a\leq C(C_n^k)^{-1/k}$, the curvature flow \eqref{ge} admits a solution $\M_{u(x,t)}$ for all $t>0.$ Moreover, the normalized flow
$$\td{X}=\lt(x, u(x, t)-at\rt)$$
converges to a translating solution $\M_{u^{\infty}}$ with the asymptotic behavior
 $$u^{\infty}(x)-|x|\goto\varphi\lt(\frac{x}{|x|}\rt),\text{ as }\ \ |x|\goto\infty.$$
\end{theorem}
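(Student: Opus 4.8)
The plan is to follow the standard strategy for long-time existence and convergence of graphical curvature flows, adapted to the Lorentzian setting as in \cite{WX22-2}, but with the translator of Theorem \ref{es-th1} playing the role of the self-expander. First I would set up the scalar parabolic problem: writing $\M_t$ as the graph of $u(x,t)$, equation \eqref{ge} becomes a fully nonlinear parabolic PDE for $u$, and the spacelike condition $|Du|<1$ must be preserved. The initial barrier hypothesis \eqref{assume} is exactly what guarantees that, for $a\le C(C_n^k)^{-1/k}$, the translator $u^a$ from Theorem \ref{es-th1} (scaled so its velocity is $a$) lies above $\M_0$ while a suitable sub-solution lies below; by the comparison principle for \eqref{ge} the flow stays trapped between $u^a + at$ and the lower barrier, which both share the asymptotics $|x|+\varphi(x/|x|)$ up to a bounded shift. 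This pins down $u(x,t)-|x|-at$ uniformly in $t$ and controls $C^0$ at spatial infinity.

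Next I would establish the a priori estimates needed to continue the flow for all time and to pass to the limit. The key quantities are: (i) a gradient estimate $|Du|\le 1-\delta(R,T)$ on compact sets, obtained from the spacelike barriers together with an interior gradient estimate for this class of equations (as in the Lorentzian literature, e.g. the techniques behind \cite{ACFMc} and \cite{WX22-2}); (ii) preservation of strict convexity of $\M_t$, which for $\sigma_k^{1/k}$-flows follows from the concavity of $\sigma_k^{1/k}$ on the positive cone and a tensor maximum principle argument — here one uses that the barrier assumption \eqref{assume} keeps the speed comparable to $-\langle\nu,E\rangle$, hence bounded below on compacta, so the evolving second fundamental form cannot degenerate; (iii) the speed bound $0<\sigma_k(\kappa[\M_t])\le -C\langle\nu,E\rangle$ is itself preserved along the flow, by applying the maximum principle to the evolution equation of $\sigma_k/(-\langle\nu,E\rangle)$ (this is the monotonicity that makes \eqref{assume} the natural hypothesis). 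With uniform parabolicity and uniform concavity in hand on every compact set, Krylov–Safonov and Schauder estimates give $C^\infty_{loc}$ bounds independent of $t$, so the flow exists for all $t>0$.

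For the convergence of the normalized flow $\tilde X=(x,u(x,t)-at)$, I would show that $v(x,t):=u(x,t)-at$ is monotone in $t$ — the sign of $\partial_t v = \partial_t u - a$ is determined by comparing $\sqrt{1-|Dv|^2}\,(\sigma_k/C_n^k)^{1/k}$ with $a$, and one checks this sign is preserved (again a maximum-principle argument using that equality holds exactly on the translator). Monotone plus uniformly bounded with uniform $C^\infty_{loc}$ estimates forces $v(\cdot,t)\to u^\infty$ in $C^\infty_{loc}$ along every sequence, and the limit is a static solution of the normalized flow, i.e. satisfies \eqref{teq} with velocity $a$; the trapping between barriers forces the asymptotics $u^\infty(x)-|x|\to\varphi(x/|x|)$, and uniqueness in Theorem \ref{es-th1} identifies $\M_{u^\infty}$ as the translator. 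A standard interpolation/parabolic-rescaling argument upgrades the convergence to the claimed mode.

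The main obstacle I expect is the interplay between the \emph{noncompactness} and the barrier control: unlike the closed case, the maximum principle arguments must be run on exhausting balls with the translator barriers controlling the behavior near $\partial B_R$, and one must ensure all estimates — gradient bound away from the light cone, lower bound on $\sigma_k$, convexity — are uniform as $R\to\infty$ and $t\to\infty$ simultaneously. In particular, proving that the speed lower bound (equivalently, that the flow does not "flatten out" at infinity) persists for all time, and doing so with constants depending only on the data in \eqref{assume} and $\varphi$, is the delicate point; this is precisely where the hypothesis $a\le C(C_n^k)^{-1/k}$ and the comparison with the explicit translator of Theorem \ref{es-th1} are essential.
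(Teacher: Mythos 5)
Your overall architecture --- sandwich the flow between translator barriers, obtain a priori estimates, show a monotone quantity converges to a static solution, and identify the limit with Theorem~\ref{es-th1} --- matches the paper's strategy in spirit. But there is a substantial gap in the technical core: you propose to run everything in the primal picture, applying interior gradient estimates, a tensor maximum principle for convexity, and Krylov--Safonov/Schauder directly to $u(x,t)$ on $\R^n$, using exhausting balls $B_R$ in physical space. That is not what the paper does, and it is not clear your route can close. The paper (following Treibergs, Ren--Wang--Xiao, Wang--Xiao) passes to the Legendre transform $u^*$ and poses an approximate Dirichlet problem~\eqref{ubr2} on balls $B_r\subset B_1$ in \emph{gradient} space, sending $r\to 1$. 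This is the essential device, and it does three jobs at once that your proposal leaves open: (i) it converts the noncompact, degenerate problem (degenerate as $|Du|\to 1$ near infinity) into a uniformly elliptic Dirichlet problem on a bounded domain, so the boundary-value maximum principle applies without any ad hoc argument about "controlling behavior near $\partial B_R$"; (ii) it builds strict convexity into the formulation (the operator $\sigma_n/\sigma_{n-k}$ of the dual second fundamental form is only elliptic in the convex cone), so no tensor maximum principle for preservation of convexity is needed --- the convexity of the limit is automatic; (iii) the boundary $\partial B_r$ corresponds to fixed gradient $|Du|=r$, where the speed $F_*$ and second derivatives are controlled by the prescribed boundary data $u_0^*-at$, which is precisely where Lemmas~\ref{sap-lem3.1}--\ref{sap-lem5.4} live.

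Two further points. First, your monotonicity claim for $v=u-at$ is the right intuition --- after rescaling, \eqref{assume} says exactly $\sqrt{1-|Du_0|^2}(\sigma_k/C_n^k)^{1/k}\le a$, so $\partial_t v\le 0$ at $t=0$, and in the dual picture $\tilde u_r^*=u_r^*+at$ is monotone increasing with the barrier $\underline u^*$ above and $u_0^*$ below --- but justifying the propagation of the sign of $u_t-a$ on $\R^n$ requires a Phragm\'en--Lindel\"of-type argument or the bounded dual domain; you should not invoke the parabolic maximum principle on a noncompact domain without addressing this. Second, your appeal to Krylov--Safonov/Schauder for $C^\infty_{\mathrm{loc}}$ presupposes a local $C^2$ bound, which is the hardest estimate here; the paper obtains it through a Pogorelov-type interior estimate (Lemma~\ref{loc-c2-lem}, $(c-u_r)^m\log\ka_{\max}\le C$) proved in the dual variables, with constants controlled by the local $F$-bound of Lemma~\ref{loc-F-lem}. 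Your proposal identifies the right obstacle in its closing paragraph but does not supply the mechanism --- the Legendre transform plus the Pogorelov weight $(c-u_r)^m$ --- that actually resolves it.
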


The organization of this paper is as follows: In Section 2, we discuss the rotation symmetric translation solutions. Section 3 presents the translation solutions with prescribed asymptotic behavior. In section 4, we study the long time existence and convergence of the $k$-Hessian curvature flow.

\section{Radial translating solutions}
In this section, we establish the existence of an entire radial solution to (\ref{teq}), which is unique up to a constant. By appropriate scaling, we may assume that $a=1$.

\begin{theorem}\label{thm1}
	There exists a strictly radial solution  $u(|x|): \mathbb{R}^n\rightarrow \mathbb{R}$ of
	\eqref{teq}.
	The asymptotic expansion of $u(|x|)$ is
	\begin{eqnarray}\label{asy}
	u(|x|)=|x|+c_0+O\left(|x|^{\frac{2n-k^2-k}{k}}e^{-\frac{2n}{k^2}|x|^k}\right), \text{ as } |x|\rightarrow +\infty
	\end{eqnarray}
for some constant $c_0$. The radial symmetric solution is unique up to  a constant.
\end{theorem}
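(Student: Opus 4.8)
The plan is to reduce the PDE \eqref{teq} to an ODE under the radial ansatz $u=u(r)$, $r=|x|$, and then analyze that ODE. For a radial spacelike graph, the principal curvatures of $\M_u$ split into the ``radial'' one $\kappa_1$ and the ``tangential'' one $\kappa_2$ (with multiplicity $n-1$), and both can be written explicitly in terms of $u'$ and $u''$: with $w=\sqrt{1-u'^2}$ one has (up to signs fixed by the future-directed timelike normal convention)
\begin{equation*}
\kappa_1=\frac{u''}{(1-u'^2)^{3/2}},\qquad \kappa_2=\frac{u'}{r(1-u'^2)^{1/2}}.
\end{equation*}
Plugging these into $\sigma_k(\kappa)=\binom{n-1}{k}\kappa_2^k+\binom{n-1}{k-1}\kappa_1\kappa_2^{k-1}$ and using $\langle \nu,E\rangle=-1/\sqrt{1-u'^2}$, equation \eqref{teq} with $a=1$ becomes a first-order ODE for the quantity $p:=u'/\sqrt{1-u'^2}$ (the natural ``momentum'' variable), of the form $p'+\frac{n-1}{r}p=F(r,p)$ for an explicit $F$; equivalently, after multiplying by the integrating factor $r^{n-1}$ it becomes $(r^{n-1}p)'=r^{n-1}F$. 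First I would set up this reduction carefully and record the ODE in a clean form.

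Next I would establish local existence of a solution with $u(0)=0$, $u'(0)=0$ (the only values compatible with smoothness at the origin), using the standard ODE existence theory after desingularizing at $r=0$; the ansatz $u'(r)\sim cr$ near $0$ determines the admissible initial slope of $p$, and strict convexity ($\kappa_1,\kappa_2>0$) should hold in a neighborhood of the origin. Then I would prove the solution extends to all $r\in[0,\infty)$: the key is a priori bounds showing $|u'|<1$ stays away from $1$ in a controlled way and that $u''>0$ (so $\kappa_1>0$) is preserved, so the ODE cannot blow up in finite $r$; monotonicity of $p$ and a barrier/comparison argument against the explicit model give $u'\to 1$ as $r\to\infty$. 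This is where most of the work lies.

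For the asymptotics \eqref{asy}, I would change variables to $s=u'$ or to $v:=1-u'^2\to 0$ and linearize the ODE near the equilibrium at infinity. Writing $u(r)=r+c_0+\psi(r)$ with $\psi'\to 0$, the leading balance in the ODE comes from the term forcing $\sigma_k^{1/k}=1/\sqrt{1-u'^2}$; one finds $\sqrt{1-u'^2}$ behaves like a solution of a linear equation whose dominant behavior is $r^{(2n-k^2-k)/k}e^{-(2n/k^2)r^k}$, giving the stated decay rate for $\psi'$ and hence for $\psi=u(r)-r-c_0$ after integration from $\infty$. The constant $c_0=\lim_{r\to\infty}(u(r)-r)$ exists precisely because this integral converges. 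Finally, uniqueness up to a constant follows from uniqueness of the ODE solution with the admissible initial data: the reduced equation is first order in $p=u'/\sqrt{1-u'^2}$, the value $p(0)=0$ is forced, and Gronwall (or a direct comparison of two solutions of the first-order ODE) shows $p$, hence $u'$, is unique, so two radial solutions differ by a constant.

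The main obstacle I anticipate is the global existence step together with the sharp asymptotic rate: one must simultaneously keep the solution spacelike ($|u'|<1$) and strictly convex out to infinity while extracting the precise exponential-in-$r^k$ decay constant $2n/k^2$ and the polynomial prefactor exponent $(2n-k^2-k)/k$. Getting these exact constants requires a careful linearization and a matching of super/sub-solutions rather than a crude estimate; the presence of the binomial coefficients $\binom{n-1}{k}$, $\binom{n-1}{k-1}$ in $\sigma_k$ and the normalization by $\binom{n}{k}$ must be tracked exactly for the constants to come out as claimed.
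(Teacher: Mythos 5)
Your overall roadmap (reduce \eqref{teq} to a first–order ODE under the radial ansatz, prove global existence of a spacelike solution with $u'\uparrow 1$, then extract the sharp asymptotics and get $c_0$ as a convergent improper integral) is the same as the paper's. But there are a couple of concrete issues with the reduction you propose and, more importantly, the two central steps of the argument are flagged as ``to do'' rather than carried out, and these are precisely where the content of the theorem lies.

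On the reduction: you set $p=u'/\sqrt{1-u'^2}$ and assert the ODE takes the form $p'+\tfrac{n-1}{r}p=F$ with integrating factor $r^{n-1}$. With your $p$, the radial and tangential principal curvatures are $\kappa_1=p'$ and $\kappa_2=p/r$, so
\[
\sigma_k(\kappa)=\binom{n-1}{k-1}p'\Bigl(\frac{p}{r}\Bigr)^{k-1}+\binom{n-1}{k}\Bigl(\frac{p}{r}\Bigr)^{k},
\]
and after dividing by $C_n^k$ and using $\sigma_k^{1/k}=(C_n^k)^{1/k}\sqrt{1+p^2}$ the equation becomes
\[
\frac{1}{n}\,\frac{d(p^k)}{dr}+\frac{n-k}{n}\,\frac{p^k}{r}=r^{k-1}(1+p^2)^{k/2},
\quad\text{i.e.}\quad
\frac{d}{dr}\bigl(r^{n-k}p^k\bigr)=n\,r^{n-1}(1+p^2)^{k/2}.
\]
So the natural divergence structure involves $r^{n-k}p^k$, not $r^{n-1}p$; your claimed form $(r^{n-1}p)'=r^{n-1}F$ is the $k=1$ (mean curvature) identity and does not generalize. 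This matters because the whole analysis hinges on choosing a substitution in which the ODE has a tractable structure. The paper instead sets $y=u'$, $z=y^k$, and obtains
\[
z'=\Bigl(nr^{k-1}-(n-k)\frac{z}{r}\Bigr)\bigl(1-z^{2/k}\bigr),
\]
where the crucial factor $1-z^{2/k}$ (which vanishes exactly at the spacelike threshold $|u'|=1$) multiplies everything; this makes the a priori bound $0<z<1$, and hence global spacelikeness and the rate $1-z\sim e^{-c r^k}$, essentially built into the equation. Your $p^k$ (equivalently $Z=p^k$) form pushes the spacelike constraint into the unbounded growth of $Z$, which is workable but not obviously easier, and you would need to redo the barrier argument from scratch.

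On the two hard steps: you correctly identify that global existence with $|u'|<1$ preserved and the extraction of the exact exponents $\frac{2n-k^2-k}{k}$ and $\frac{2n}{k^2}$ are the heart of the matter, but the proposal does not supply arguments for either. The paper's global existence uses a two–sided differential inequality, obtained after establishing $z<r^k$ and the monotonicity $z'>0$ through a delicate case analysis at the first point where $z'$ could vanish (and an $\epsilon$–regularization of the ODE near $r=0$ to handle the singular coefficient when $k<n$); none of this is in your plan beyond ``a priori bounds'' and ``barrier/comparison.'' For the asymptotics, the paper does not linearize; it inserts the precise ansatz $y(r)=1-C(r)\,r^{2(n-k)/k}e^{-2n r^k/k^2}$, derives a first–order equation for $C(r)$, and proves $C(r)$ converges to a positive constant by integrating a remainder that is shown to be absolutely integrable — a matched–asymptotics computation rather than a spectral/linearization one. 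Your sketch says such constants must be ``tracked exactly'' but does not produce them. As written, the proposal is a correct outline of the strategy, with a misstated reduction and with the existence, monotonicity/spacelikeness preservation, and sharp asymptotic rate all left unproven.
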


For radial symmetric solution, we set $u(x)=u(|x|)=u(r)$ and define $y(r)=\frac{\partial u}{\partial r}$. A straightforward calculation yields
$$D_iu=y\frac{x_i}{|x|}, D^2_{ij}u=\frac{y}{|x|}\left(\delta_{ij}-\frac{x_ix_j}{|x|^2}\right)+y'\frac{x_ix_j}{|x|^2}.$$
Therefore, the curvature reads
$$\ka[\M_u]=\frac{1}{\sqrt{1-y^2}}\left(\frac{y'}{1-y^2},\frac{y}{r},\cdots,\frac{y}{r}\right).$$
Then
	\eqref{teq} can be rewritten as the following ODE:
\begin{equation}
	\frac{1}{\left(1-y^{2}\right)^{k / 2}} \frac{y^{k-1}}{r^{k-1}}\left(\frac{k}{n} \frac{y^{\prime}}{1-y^{2}}+\frac{n-k}{n} \frac{y}{r}\right)=\left(\frac{1}{\sqrt{1-y^{2}}}\right)^{k},
\end{equation}
which simplifies to
\begin{equation}
	\frac{y^{k-1}}{r^{k-1}}\left(\frac{k}{n} \frac{y^{\prime}}{1-y^{2}}+\frac{n-k}{n} \frac{y}{r}\right)=1.
\end{equation}
This can be further rewritten as
\begin{equation}\label{yeq}
	\frac{dy}{dr}=\frac{n}{k}\left(\frac{r^{k-1}}{y^{k-1}}-\frac{n-k}{n}\frac{y}{r}\right)(1-y^{2}).
\end{equation}
Letting $z(r)=y^k(r)$, we obtain
\begin{equation}\label{zeq}
	\frac{dz(r)}{dr}=\left(nr^{k-1}-(n-k)\frac{z}{r}\right)(1-z^{\frac{2}{k}}).
\end{equation}

	We first prove the existence part.
	
\begin{prop}\label{prop1}
	There exists a solution $z(r)$ to (\ref{zeq}) such that $z(0)=0$ and
\begin{equation}\label{z2}
	0<z(r)<1, z'(r)>0\ \ \ on \ (0,\infty).
\end{equation}
Moreover, we have  	\[\lim\limits_{r\rightarrow +\infty}z(r)=1,\ and\ \lim\limits_{r\rightarrow +\infty}z'(r)=0.
	\]
\end{prop}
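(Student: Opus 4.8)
The plan is to construct the solution in three stages. \emph{First}, I would produce a local solution near the singular point $r=0$ with the expected leading behaviour $z\sim r^{k}$. Since \eqref{zeq} is singular at the origin, I would substitute $w=z/r^{k}$, which turns \eqref{zeq} into the regular‑singular equation $w'=\tfrac{n(1-w)}{r}-\bigl(n-(n-k)w\bigr)w^{2/k}r$ and, after using the integrating factor $r^{n}$ and integrating from $0$, into the fixed‑point equation
\[
w(r)=1-r^{-n}\int_{0}^{r}s^{n+1}\bigl(n-(n-k)w(s)\bigr)w(s)^{2/k}\,ds .
\]
On the ball $\{w\in C[0,r_{0}]:\ |w-1|\le\tfrac12\}$ the right‑hand side is a contraction once $r_{0}$ is small, because the integrand is bounded and Lipschitz in $w$ there while the weight $r^{-n}\int_{0}^{r}s^{n+1}\,ds=\tfrac{r^{2}}{n+2}$ is small; hence there is a unique continuous fixed point, with $w(0)=1$ and in fact $w=1+O(r^{2})$. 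Then $z=r^{k}w$ solves \eqref{zeq} on $(0,r_{0}]$ with $z(0)=0$; shrinking $r_{0}$ we get $0<z<1$ there and, since $nr^{k}-(n-k)z=r^{k}\bigl(k-(n-k)(w-1)\bigr)>0$, also $z'>0$ on $(0,r_{0}]$.

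\emph{Second}, I would propagate the bounds to the whole half‑line. Let $(0,R)$ be the maximal interval of existence and let $r^{\ast}\le R$ be the first point at which one of the three conditions $z>0$, $z<1$, $nr^{k}-(n-k)z>0$ fails; on $(0,r^{\ast})$ all three hold, so $z'>0$. Each failure mode is excluded: $z(r^{\ast})>0$ because $z$ is increasing and $z(r_{0})>0$; $z(r^{\ast})<1$ because, writing $v=1-z$, near any finite such $r^{\ast}$ one has $1-z^{2/k}\le C\,v$ and $r^{-1}\bigl(nr^{k}-(n-k)z\bigr)$ bounded, whence $(\log v)'\ge -C'$ up to $r^{\ast}$ and $v(r^{\ast})>0$; and $n(r^{\ast})^{k}-(n-k)z(r^{\ast})>0$ because otherwise \eqref{zeq} gives $z'(r^{\ast})=0$, while differentiating \eqref{zeq} once more (using $z'(r^{\ast})=0$ and $n(r^{\ast})^{k}=(n-k)z(r^{\ast})$) gives $z''(r^{\ast})=\tfrac{nk(r^{\ast})^{k-1}}{r^{\ast}}\bigl(1-z(r^{\ast})^{2/k}\bigr)>0$, contradicting $z'>0$ on $(0,r^{\ast})$. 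Hence $r^{\ast}=R$, and since $0<z<1$ and $r\ge r_{0}>0$ keep the right‑hand side of \eqref{zeq} bounded, the solution extends past every finite $r$, so $R=\infty$; this is exactly \eqref{z2}, and $z$ is increasing and bounded on $(0,\infty)$.

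\emph{Third}, I would read off the limits. Being increasing and $\le 1$, $z(r)\to L\in(0,1]$. If $L<1$, then for $r$ large $1-z^{2/k}\ge\tfrac12(1-L^{2/k})$ and $nr^{k}-(n-k)z\ge\tfrac n2 r^{k}$, so $z'\ge c\,r^{k-1}$, forcing $z\to\infty$ — impossible; hence $L=1$. For $z'\to 0$, I would first upgrade $v=1-z\to 0$ to the exponential bound $v(r)\le C e^{-c r^{k}}$: for $r$ large $1-z^{2/k}\ge c_{0}v$ and $nr^{k}-(n-k)z\ge\tfrac n2 r^{k}$, so $v'\le -c_{1}r^{k-1}v$, and integrating the resulting inequality for $(\log v)'$ gives the bound. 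Then \eqref{zeq} yields $0<z'\le nr^{k-1}(1-z^{2/k})\le C r^{k-1}v\le C'r^{k-1}e^{-c r^{k}}\to 0$, which completes the proof.

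The step I expect to be the main obstacle is the analysis at $r=0$: one must verify that the contraction genuinely selects the branch with $z(0)=0$ and rate $z\sim r^{k}$, since the sign of $nr^{k}-(n-k)z$ near the origin is decided only by the $O(r^{2})$ correction to $w\equiv 1$. A secondary subtlety is the $\infty\cdot 0$ competition in the last step, which is why quantifying the decay of $1-z$ is necessary. The sharp exponent $e^{-\frac{2n}{k^{2}}r^{k}}$ appearing in \eqref{asy} requires a finer ODE analysis and is not needed for Proposition~\ref{prop1}.
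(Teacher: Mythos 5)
Your proposal is correct, but it follows a genuinely different path from the paper's. For the singular point $r=0$ the paper replaces $r$ by $r+\epsilon$ and the initial condition by $z(0)=\epsilon^k$, establishes $\epsilon$-uniform $C^0$ and $C^2$ bounds on $[0,\infty)$ (via the quantitative estimates $\exp(-\tfrac{2n}{k}((r+\epsilon)^k-\epsilon^k))\le\tfrac{1-z_\epsilon}{1-\epsilon^k}\le\exp(-\tfrac{1}{n}((r+\epsilon)^k-\epsilon^k))$), and then lets $\epsilon\to 0$; you instead desingularize directly by the substitution $w=z/r^k$, recast the ODE as a fixed-point equation with the weight $r^{-n}\int_0^r s^{n+1}\,ds=\tfrac{r^2}{n+2}$, and invoke the contraction mapping theorem to produce the branch with $z(0)=0$, $z\sim r^k$. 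For the monotonicity, the paper excludes the tangency $n r^{k-1}-(n-k)z/r=0$ by integrating $(r^{n-k}z)'<n r^{n-1}$ to get $z<r^k$; you exclude it by computing $z''(r^*)=nkr^{*k-2}\bigl(1-z(r^*)^{2/k}\bigr)>0$, which contradicts $z'\ge z'(r^*)=0$ on the left. Both arguments work. Your route avoids the $\epsilon$-limit and is more self-contained, while the paper's regularization automatically produces the uniform two-sided exponential bound on $1-z$ that is reused verbatim in the asymptotic expansion (Proposition 2.3); with your approach that bound would be re-derived in the third stage, where you already obtain the upper exponential estimate $v\le Ce^{-cr^k}$.

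One point to make explicit when writing this up: after the contraction gives $|w-1|\le\tfrac12$, the inequality $nr^k-(n-k)z=r^k\bigl(k-(n-k)(w-1)\bigr)>0$ is automatic only when $n<3k$. For general $k<n$ you need the sharper $w=1+O(r^2)$ (which the fixed-point equation indeed provides) and a further shrinking of $r_0$; you do mention this, but since the positivity of $nr^k-(n-k)z$ is what drives $z'>0$ and hence the whole second stage, the dependence on the $O(r^2)$ refinement rather than on the crude ball radius $\tfrac12$ deserves emphasis.
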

	
	\begin{proof}
		
	Since (\ref{zeq}) is singular at $r=0$ for $k<n$, we consider an approximation problem
	\begin{equation}\label{aeq}
		\frac{dz(r)}{dr}=\left(n(r+\epsilon)^{k-1}-(n-k)\frac{z}{(r+\epsilon)}\right)(1-z^{\frac{2}{k}})
	\end{equation}
	and
	\begin{equation}\label{icz}
		z(0)=\epsilon^k,
	\end{equation}
	where $\epsilon$ is a small constant.
	  	
	By the short time existence theorems of ODE, we get that for any $0<\epsilon<1$, there is a unique solution to \eqref{aeq} and \eqref{icz} near $r=0$. Denote the solution by $z_\epsilon(r)$, and suppose its maximum definition interval is $[0, l)$ .

 Since $z_\epsilon$ satisfies \eqref{aeq} and \eqref{icz}, we have
\[z_\epsilon'(0)=k\epsilon^{k-1}(1-\epsilon^2)>0.\]
Therefore,  $z_\epsilon(r)>0$ , for $r\in (0,r_0)$, where  $r_0<l$ is very small.

We claim that
\begin{equation}\label{z0}
z_\epsilon'(r)> 0, \text{ for any } 0<r<l.
\end{equation}
Suppose our claim is not true, and let  $0<r_1<l$ be the first value  such that  $z_\epsilon'(r_1)=0$.	
	By $z'_\epsilon(r_1)=0$, we have either $1-z_\epsilon^{\frac{2}{k}}(r_1)=0$ or $$n(r_1+\epsilon)^{k-1}-(n-k)\frac{z_\epsilon(r_1)}{r_1+\epsilon}=0.$$
	 We will consider these two cases separately.
	
	\textbf{ Case 1}: Since  $z'_{\epsilon}> 0$ for $0<r<r_1$, it follows that  $z_{\epsilon}>0$ for $0<r<r_1$. Therefore, we have $z_\epsilon(r_1)=1$ and $\epsilon^k<z_{\epsilon}(r)<1$ for $0<r<r_1$, which implies $1-z^{2/k}_{\epsilon}> 0$. Hence, for $0<r<r_1$, since $z'_{\epsilon}>0$, we have  \[n(r+\epsilon)^{k-1}-(n-k)\frac{z_\epsilon(r)}{r+\epsilon}>0.\]	  However, from \eqref{aeq}, we obtain for $0<r<r_1$,
	\begin{equation*}
	\frac{dz_{\epsilon}}{dr}\leq\left(n(r+\epsilon)^{k-1}
	-(n-k)\frac{z_{\epsilon}}{(r+\epsilon)}\right)(1-z_{\epsilon})(1+z_{\epsilon}),
\end{equation*}
 which implies
\begin{equation*}
\frac{d}{dr}(-\ln (1-z_{\epsilon}))\leq2\left(n(r+\epsilon)^{k-1}-(n-k)\frac{z_{\epsilon}}{r+\epsilon}\right).
\end{equation*}
Integrating this inequality from $0$ to $r_1$, we get
\[\ln(1-\epsilon^k)-\ln(1-z_{\epsilon}(r_1))\leq 2n\int_0^{r_1}(r+\epsilon)^{k-1}dr <+\infty.\]
Thus $z_{\epsilon}(r_1)=1$ is impossible. In the following, we can always assume $0<z_{\epsilon}<1$ for $0\leq r\leq r_1$.

		\textbf{ Case 2}: Assume that $n(r_1+\epsilon)^{k-1}-(n-k)\frac{z_\epsilon(r_1)}{r_1+\epsilon}=0$. Therefore, for $0<r<r_1$, since $z'_{\epsilon}>0$, we also have  $n(r+\epsilon)^{k-1}-(n-k)\frac{z_\epsilon(r)}{r+\epsilon}>0$.	   However, by \eqref{aeq}, we obtain for $0<r<r_1$,
	\begin{eqnarray*}
	\frac{d z_{\epsilon}}{dr}<n(r+\epsilon)^{k-1}-(n-k)\frac{z_{\epsilon}}{(r+\epsilon)},
	\end{eqnarray*}
	namely,
 	\begin{eqnarray*}	
 		\frac{d}{dr}((r+\epsilon)^{n-k}z_{\epsilon})<n(r+\epsilon)^{n-1}.
 		\end{eqnarray*}
 	Integrating it from $0$ to $r_1$ yields
 	\begin{equation}
 		(r_1+\epsilon)^{n-k}z(r_1)-\epsilon^n<(r_1+\epsilon)^n-\epsilon^n.
 	\end{equation}
 	Thus we get \[z_\epsilon(r_1)<(r_1+\epsilon)^k,\]
 which implies
	$$n(r_1+\epsilon)^{k-1}-(n-k)\frac{z_\epsilon(r_1)}{r_1+\epsilon}>0.$$
	This results in a contradiction.
	
	Thus, we have proved that $z'_{\epsilon}(r)>0$ for $0<r<l$.  Furthermore, using a similar argument as above, we can prove that for $0<r<l$,
	\begin{equation}\label{z1}
	0<z_{\epsilon}(r)<1 \text{ and } z_{\epsilon}(r)<(r+\epsilon)^k.
	\end{equation}
	Therefore by the ODE theory, we conclude that $l=+\infty$.
	
	Let's derive a more explicit estimate for $1-z_{\epsilon}$.  It is clear that
	$$1-z_{\epsilon}=\left(1-z^{\frac{1}{k}}_{\epsilon}\right)\left(1+z_{\epsilon}^{\frac{1}{k}}+\cdots+z^{\frac{k-1}{k}}_{\epsilon}\right),$$ which implies
	$$\left(1-z^{\frac{1}{k}}_{\epsilon}\right)\leq 1-z_{\epsilon}\leq n\left(1-z^{\frac{1}{k}}_{\epsilon}\right).$$
	Therefore, from \eqref{aeq}, we have
	\begin{equation*}
	\frac{1}{n}\left(n(r+\epsilon)^{k-1}-(n-k)\frac{z_{\epsilon}}{(r+\epsilon)}\right)(1-z_{\epsilon})\leq \frac{dz_{\epsilon}}{dr}\leq 2n(r+\epsilon)^{k-1}(1-z_{\epsilon}).
\end{equation*}	
Since $z_{\epsilon}<(r+\epsilon)^k$, the above equation implies
\begin{equation*}
	\frac{k}{n}(r+\epsilon)^{k-1}(1-z_{\epsilon})\leq \frac{dz_{\epsilon}}{dr}\leq 2n(r+\epsilon)^{k-1}(1-z_{\epsilon}).
\end{equation*}	
Thus, integrating from $0$ to $r$, we get
\begin{equation}\label{1-z}
\exp\left(-\frac{2n}{k}\left((r+\epsilon)^k-\epsilon^k\right)\right)\leq \frac{1-z_{\epsilon}(r)}{1-\epsilon^k}\leq \exp\left(-\frac{1}{n}\left((r+\epsilon)^k-\epsilon^k\right)\right).
\end{equation}
Thus, by \eqref{aeq}, we obtain that
$$0<\frac{d z_{\epsilon}}{dr}<C,$$
where $C$ is a uniform constant.

Taking the  derivative of \eqref{aeq}, we have
  \begin{eqnarray}
		z''&=&\left(n(k-1)(r+\epsilon)^{k-2}+(n-k)\frac{z}{(r+\epsilon)^2}-(n-k)\frac{z'}{r+\epsilon}\right)(1-z^{\frac{2}{k}})\nonumber\\
		&&+\left(n(r+\epsilon)^{k-1}-(n-k)\frac{z}{(r+\epsilon)}\right)\left(-\frac{2}{k}z^{\frac{2}{k}-1}z'\right)\nonumber.
	\end{eqnarray}
Using \eqref{aeq} and \eqref{1-z}, we get the uniform bound of $z''_{\epsilon}(r)$, i.e.
	$$\left|\frac{d^2 z_{\epsilon}}{dr^2}\right|<C,$$
where $C$ is a uniform constant.

	Since $z_\epsilon$ has uniform $C^0$ and $C^2$ bounds on $[0,\infty)$,   we can let $\epsilon\rightarrow 0$ and obtain a solution $z$ of \eqref{zeq}. Moreover, by \eqref{z1}, we have $z(0)=0$, it follows that  $$0\leq z\leq 1 \text{and} \ z\leq r^k.$$ Therefore, $z'\geq 0$.
Furthermore, by applying a similar argument as before, we can demonstrate that $z'>0$ and $0<z<1$ for $r>0$.   Letting $\epsilon\rightarrow 0$, we derive
 \begin{equation}\label{1-z*}
\exp\left(-\frac{2n}{k}r^k\right)\leq 1-z(r)\leq \exp\left(-\frac{1}{n}r^k\right),
\end{equation}
which implies $$\lim_{r\rightarrow +\infty} z=1\  \text{and}\  \lim_{r\rightarrow+\infty}z'=0$$ by \eqref{zeq}.

	This completes the proof.

	\end{proof}

	\begin{prop}\label{prop2}
	Let $y(r)$ be the solution of (\ref{yeq}) as constructed in Proposition \ref{prop1}. Then as $r\rightarrow \infty$,  $y(r)$ has the following asymptotic expansion:
	\[y(r)=1-Cr^{\frac{2(n-k)}{k}}e^{-\frac{2n}{k^2}r^k}+o(r^{\frac{2(n-k)}{k}}e^{-\frac{2n}{k^2}r^k}),\]
	where $C$ is a positive constant independent of  $r$.
		\end{prop}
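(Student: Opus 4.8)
The plan is to first establish a sharp asymptotic for $w(r):=1-z(r)=1-y^{k}(r)$ directly from \eqref{zeq}, and then transfer it to $y=z^{1/k}$. By Proposition \ref{prop1} we already know that $0<z<1$, $z\to 1$, $z'\to 0$ as $r\to\infty$, and, most importantly, the super-exponential decay estimate \eqref{1-z*}, which in particular gives $w(r)\le e^{-r^{k}/n}$ for all $r$. Fix $R_{0}$ so large that $w(r)<\delta_{0}$ for $r\ge R_{0}$, where $\delta_{0}\in(0,1)$ is small enough that the Taylor expansions used below are valid on $(0,\delta_{0})$; such $R_{0}$ exists since $w\to 0$.

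First I would rewrite \eqref{zeq} as a perturbed equation for the logarithmic derivative of $w$. Substituting $z=1-w$, writing $1-(1-w)^{2/k}=\tfrac{2}{k}\,w\,(1+\psi(w))$ with $\psi$ smooth near $0$ and $\psi(0)=0$, and expanding $nr^{k-1}-(n-k)z/r=nr^{k-1}-(n-k)/r+(n-k)w/r$, equation \eqref{zeq} becomes
\[
\frac{w'(r)}{w(r)}=-\frac{2n}{k}\,r^{k-1}+\frac{2(n-k)}{k\,r}+E(r),
\]
where $E(r)$ collects exactly the terms carrying a factor $\psi(w)$ or $w/r$. Using $|\psi(w)|\le C\,w\le C\,e^{-r^{k}/n}$ together with the polynomial growth of $nr^{k-1}-(n-k)/r$, one gets $|E(r)|\le C\,r^{k-1}e^{-r^{k}/n}$ for $r\ge R_{0}$, hence $\int_{R_{0}}^{\infty}|E(r)|\,dr<\infty$.

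Next I would integrate. Set $g(r):=w(r)\,r^{-\frac{2(n-k)}{k}}\,e^{\frac{2n}{k^{2}}r^{k}}$, which is positive since $w>0$. A direct computation of $g'/g$ from the displayed identity yields $g'/g=E(r)$, so $\ln g(r)=\ln g(R_{0})+\int_{R_{0}}^{r}E(s)\,ds$. Since $\int_{R_{0}}^{\infty}E$ converges to a finite value, the limit $C':=\lim_{r\to\infty}g(r)=g(R_{0})\exp\!\big(\textstyle\int_{R_{0}}^{\infty}E\big)$ exists and is strictly positive. Therefore
\[
1-z(r)=C'\,r^{\frac{2(n-k)}{k}}\,e^{-\frac{2n}{k^{2}}r^{k}}\,\bigl(1+o(1)\bigr),\qquad r\to\infty .
\]
Finally, since $y=z^{1/k}=(1-w)^{1/k}=1-\tfrac{1}{k}\,w+O(w^{2})$ and $w^{2}$ is exponentially smaller than $w$ by \eqref{1-z*}, this gives
\[
y(r)=1-\frac{C'}{k}\,r^{\frac{2(n-k)}{k}}\,e^{-\frac{2n}{k^{2}}r^{k}}+o\!\left(r^{\frac{2(n-k)}{k}}\,e^{-\frac{2n}{k^{2}}r^{k}}\right),
\]
which is the assertion with $C=C'/k>0$.

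The main obstacle is the middle step: one must make the error term $E$ genuinely integrable near infinity and rule out $C'=0$. Both are guaranteed by the decay estimate \eqref{1-z*} from Proposition \ref{prop1} — the upper bound $w\le e^{-r^{k}/n}$ forces $\int_{R_{0}}^{\infty}|E|<\infty$, and then the finiteness of $\int E$ combined with $g(R_{0})>0$ prevents the limit from vanishing; the lower bound $w\ge e^{-2nr^{k}/k}$ provides an independent confirmation that $g$ stays bounded away from $0$. A minor secondary point is verifying that the $O(w^{2})$ remainder in the binomial expansion of $(1-w)^{1/k}$ is $o$ of the principal term, which is immediate from \eqref{1-z*}.
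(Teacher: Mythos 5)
Your argument is correct, and it takes a genuinely different — and cleaner — route than the paper's. The paper parametrizes via $1-y = C(r)\,r^{2(n-k)/k}e^{-2nr^k/k^2}$, which, after expanding $y^{-(k-1)}$ and $(1+y)$, gives the nonlinear relation $\tfrac{k}{n}C'/C = (3-2k+o(1))\,C\,r^{2(n-k)/k}r^{k-1}e^{-2nr^k/k^2}$; because the right-hand side still carries the unknown $C$, the paper divides once more by $C$, integrates $(1/C)'$, and then must split into the cases $k=1$ and $k\ge 2$ according to the sign of $3-2k$, with a further explicit integral estimate and a careful choice of $r_0$ in the $k=1$ case. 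Your substitution $w=1-z=1-y^k$ sidesteps all of this: the factor $1-z^{2/k}=\tfrac{2}{k}w(1+\psi(w))$ pulls a clean factor of $w$ out of \eqref{zeq}, so $w'/w$ equals the explicit leading part plus an error $E$ that you bound directly by $Cr^{k-1}e^{-r^k/n}$ via \eqref{1-z*}, and integrability of $E$ immediately gives convergence of $\ln g$ to a finite limit — no case distinction, no explicit integral estimates, no monotonicity argument. Both routes reach the statement, but yours buys uniformity in $k$ and brevity.

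One small correction: the closing remark that the lower bound $w\ge e^{-2nr^k/k}$ "provides an independent confirmation that $g$ stays bounded away from $0$" is not right. Substituting it only gives $g\ge r^{-2(n-k)/k}e^{-\frac{2n(k-1)}{k^2}r^k}$, which tends to $0$ for every $k\ge 2$, and also for $k=1$ once $n>1$. Fortunately this remark is not load-bearing: positivity of $\lim_{r\to\infty}g(r)$ already follows from $g(R_0)>0$ together with $\int_{R_0}^{\infty}|E|<\infty$, so the proof stands without it.
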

	
\begin{proof}
Suppose $y(r)$ is the solution constructed in  Proposition \ref{prop1}.	We define the function $C(r)$ as follows:
	\begin{equation}\label{z}
 y(r)=1-C(r)r^{\frac{2(n-k)}{k}}e^{-\frac{2n}{k^2}r^k}.\end{equation}
Since $0\leq y<1$, it follows that $C(r) > 0$. Rewriting \eqref{yeq}, we have:
	\begin{equation}\label{ode3}
		\frac{k}{n}(\ln(1-y))'=\left(\frac{n-k}{n}\frac{y}{r}-\frac{r^{k-1}}{y^{k-1}}\right)(1+y).
	\end{equation}
 Using \eqref{z}, we obtain:
	\[\ln(1-y)=-\frac{2n}{k^2}r^k+2\frac{n-k}{k}\ln r+\ln C(r),\]
and	\[\frac{k}{n}(\ln(1-y))'=-2r^{k-1}+2\frac{n-k}{n}\frac{1}{r}+\frac{k}{n}\frac{C'(r)}{C(r)}.\]
We further have
	\begin{eqnarray*}
		&&\frac{n-k}{n}\frac{y}{r}-\frac{r^{k-1}}{y^{k-1}}\\
		=&&\frac{n-k}{n}\frac{1}{r}\left(1-C(r)r^{\frac{2(n-k)}{k}}e^{-\frac{2n}{k^2}r^k}\right)\\
		&&-r^{k-1}\left(1+C(r)(k-1)r^{\frac{2(n-k)}{k}}e^{-\frac{2n}{k^2}r^k}
		+o\left(C(r)(k-1)r^{\frac{2(n-k)}{k}}e^{-\frac{2n}{k^2}r^k}\right)\right),
	\end{eqnarray*}
	and
		\begin{equation*}
	1+y=2-C(r)r^{\frac{2(n-k)}{k}}e^{-\frac{2n}{k^2}r^k}.\end{equation*}
Combining these expressions, we derive
	\begin{equation}\label{add}
		\frac{k}{n}\frac{C'(r)}{C(r)}=\left(3-2k+o(1)\right)C(r)r^{\frac{2(n-k)}{k}}r^{k-1}e^{-\frac{2n}{k^2}r^k}.
	\end{equation}
Thus, we can assume that there exists some sufficiently large $r_0>1$ such that  $|o(1)|<\frac{1}{2}$. Therefore, for $k = 1$ and $r > r_0$, $C(r)$ is a monotone increasing function. For $k\geq 2$ and $r\geq r_0$, $C(r)$ is a monotone decreasing function.

For any $r>r_0$, integrating \eqref{add} from $r_0$ to $r$, we obtain
	\begin{equation}\label{2.17}
		\frac{1}{C(r_0)}-\frac{1}{C(r)}=\frac{n}{k}\int_{r_0}^{r}\left(3-2k+o(1)\right)
		 s^{\frac{2(n-k)}{k}}s^{k-1}e^{-\frac{2n}{k^2}s^k}ds.
	\end{equation}
It is clear that
\begin{equation*}
	\int_{r_0}^{+\infty}r^{\frac{2(n-k)}{k}}r^{k-1}e^{-\frac{2n}{k^2}r^k}dr
\end{equation*}
is finite and positive. Thus we know that $C(r)$ has positive uniform  lower bound for $k\geq 2$. For $k=1$, by using integral by part,  we get
\begin{eqnarray*}
	&&\int_{r_0}^{+\infty}r^{2(n-1)}e^{-2nr}dr\\
	&=&\frac{1}{2n} r_0^{2(n-1)}e^{-2nr_0}+\frac{2n-2}{(2n)^2} r_0^{2n-3}e^{-2nr_0}+\frac{(2n-2)(2n-3)}{(2n)^3} r_0^{2n-4}e^{-2nr_0}\\
	&&+\cdots \frac{(2n-2)!}{(2n)^{2n-1}} e^{-2nr_0}\\
&\leq& \left(\frac{1}{2n}+\frac{2n-2}{(2n)^2}+\cdots \frac{(2n-2)!}{(2n)^{2n-1}}\right) r_0^{2(n-1)}e^{-2nr_0}.
\end{eqnarray*}
It is easy to see that, for $1\leq m\leq 2n-2$,
$$\frac{(2n-2)(2n-3)\cdots(2n-1-m)}{(2n)^{m}} \leq \left(1-\frac{1}{n}\right)^{m}.$$
Therefore, we have
\begin{eqnarray*}
	&&\int_{r_0}^{+\infty}r^{2(n-1)}e^{-2nr}dr\\
	&\leq& \frac{1}{2n}\left(1+\left(1-\frac{1}{n}\right)+\cdots +\left(1-\frac{1}{n}\right)^{2n-2}\right) r_0^{2(n-1)}e^{-2nr_0}\\
	&\leq&\frac{1}{2}\frac{1}{n-1}r_0^{2(n-1)}e^{-2nr_0}.
	\end{eqnarray*}
By \eqref{2.17}, we get
	\begin{equation}\label{2.18}
		\frac{1}{C(r_0)}-\frac{1}{C(r)}\leq \frac{n}{n-1}r_0^{2(n-1)}e^{-2nr_0}.	
		\end{equation}
	We can write that
	$$C(r_0)=\frac{1}{1-y(r_0)}r_0^{2(n-1)}e^{-2nr_0}.$$
	Since Proposition \ref{prop1} indicates that $y\rightarrow 1$ if $r\rightarrow +\infty$. Thus, if $r_0$ is sufficiently large $y_0$ is approached to 1. Hence, we may require that
\[\frac{1}{1-y(r_0)}>\frac{n}{n-1}.\]
This gives the upper bound of $C(r)$.

Consequently,  in any cases, $C(r)$ convergences to a positive constant.

	\end{proof}
	Since $y(r) = u'(r)$, we have $u(r) = u(0) + \int_{0}^{r} y(s) \, ds$, which provides the required solution. Furthermore, we can express $u(r)$ as follows:
	
	\begin{eqnarray*}
	u(r)&=&r+u(0)-C\int_0^{r}s^{\frac{2(n-k)}{k}}e^{-\frac{2n}{k^2}s^k}ds+\int^r_0o(s^{\frac{2(n-k)}{k}}e^{-\frac{2n}{k^2}s^k})ds\\
	&=&r+u(0)-C\int_0^{+\infty}s^{\frac{2(n-k)}{k}}e^{-\frac{2n}{k^2}s^k}ds+\int^{+\infty}_0o(s^{\frac{2(n-k)}{k}}e^{-\frac{2n}{k^2}s^k})ds\\
	&&+C\int_r^{+\infty}s^{\frac{2(n-k)}{k}}e^{-\frac{2n}{k^2}s^k}ds-\int^{+\infty}_{r}o(s^{\frac{2(n-k)}{k}}e^{-\frac{2n}{k^2}s^k})ds.
	\end{eqnarray*}
	Applying L'H?pital's rule, we find:
	\begin{eqnarray*}
	&&\lim_{r\rightarrow +\infty}\dfrac{\int_r^{\infty}s^{\frac{2(n-k)}{k}}e^{-\frac{2n}{k^2}s^k}ds}{r^{\frac{2n-k^2-k}{k}}e^{-\frac{2n}{k^2}r^k}}\\
	&=&\lim_{r\rightarrow +\infty}\frac{-r^{\frac{2(n-k)}{k}}e^{-\frac{2n}{k^2}r^k}}{\frac{2n-k^2-k}{k}r^{\frac{2(n-k)-k^2}{k}}e^{-\frac{2n}{k^2}r^k}-\frac{2n}{k}r^{\frac{2(n-k)}{k}}e^{-\frac{2n}{k^2}r^k}}\nonumber\\
	&=&\frac{k}{2n}\nonumber.
	\end{eqnarray*}
	Theorem \ref{thm1} can therefore be derived from Proposition \ref{prop1} and Proposition \ref{prop2}.

\section{The translating solutions}
In this section, we aim to prove Theorem \ref{es-th1}. Since the proof is standard, we will provide only a brief outline.
\par
\subsection{Constructing barriers}
We begin by constructing barrier functions for equation \eqref{teq}. Following the approach in \cite{T, RWX20}, let $z_1^k(|x|)$
denote the radial solution constructed in Section 2 with $c_0 = 0$
, whose asymptotic expansion satisfies \eqref{asy}. For $a > 0$, define $$z_a^k(|x|)=\frac{1}{a}z_1^k(a|x|).$$
It is clear that $z_a^k$ satisfies \eqref{teq} and maintains the asymptotic behavior given in \eqref{asy}.
Let $$p_i(y)=D\varphi(y)+(-1)^{i+1}2My,\,\,i=1,2$$
for any $y\in\mathbb{S}^{n-1}$.
Set
$$\tilde{z}_i^k(x,y)=\varphi(y)-p_i(y)\cdot y+z_a^k(|x+p_i(y)|),\,\forall x\in\mathbb{R}^n, y\in\mathbb{S}^{n-1}.$$
Then,
$$q_1^k(x,a)=\sup_{y\in\mathbb{S}^{n-1}}\tilde{z}_1^k(x,y)$$
is a subsolution of \eqref{teq} and
$$q_2^k(x,a)=\inf_{y\in\mathbb{S}^{n-1}}\tilde{z}_2^k(x,y)$$ is a supersolution of \eqref{teq}.
Moreover, $q_1^k(x,a)\leq q_2^k(x,a)$ and as $|x|\rightarrow+\infty$, we have
$$q_i^k(x,a)=|x|+\varphi\left(\frac{x}{|x|}\right)+O\left(|x|^{\frac{2n-k^2-k}{k}}e^{-\frac{2n}{k^2}|x|^k}\right)
,\,\,i=1, 2.$$
\par

\subsection{The approximate problem}

Let $(q_1^k)^*(\xi, a)$ be the Legendre transform of $q_1^k(x,a)$.
To ensure the convexity of our solution, we first consider the dual Dirichlet problem
on $B_\tau$ for any $0<\tau< 1$:
\be\label{D5}
\left\{
\begin{aligned}
F_*(\w\gas_{ik}\us_{kl}\gas_{lj})&=\frac{1}{a}(C_n^k)^{-1/k}\sqrt{1-|\xi|^2} \ \ \ \ \   \text{in $B_{\tau}$},\\
\us&=(q_1^k)^{*}(\xi, a) \ \ \ \ \ \ \ \ \ \ \ \ \ \ \ \  \text{on $  \p B_{\tau}$}.
\end{aligned}
\right.
\ee
Here, we have $\w=\sqrt{1-|\xi|^2},$ $\gas_{ij}=\delta_{ij}-\frac{\xi_i\xi_j}{1+\w},$ $\us_{kl}=\frac{\p^2u^*}{\p\xi_k\p\xi_l},$  $F_* (\w\gas_{ik}\us_{kl}\gas_{lj})$ $=\lt(\frac{\s_n}{\s_{n-k}}(\ka^*[\w\gas_{ik}\us_{kl}\gas_{lj}])\rt)^{1/k},$ and
$\ka^*[\w\gas_{ik}\us_{kl}\gas_{lj}]=(\ka^*_1, \cdots, \ka^*_n)$ are the eigenvalues of the matrix $(\w\gas_{ik}\us_{kl}\gas_{lj}).$
The solvability of \eqref{D5} has been established in Section 3.1 and Section 7.2 in \cite{RWX20}.
Suppose $u^*_{\tau}$ is the solution of \eqref{D5}. Let $u_{\tau}$ be the Legendre transform of $u_{\tau}^*$.
Therefore, by standard PDE theorems, to prove
Theorem \ref{es-th1},  we only need to obtain
local  estimates   \eqref{teq}.

\par
\subsection{Local estimates}
Similar to the proof of Section 7.3 in \cite{RWX20},
we have the following local estimates for translating solitons.

\begin{lemma}\label{lem7.2.1} Let $u^*_{\tau}$ be a solution to equation \eqref{D5} and $u_\tau$ be the Legendre transform of $u^*_{\tau}.$ Then,
for any $x\in Du^*_{\tau}(B_\tau),$ we have
 $$q_1^k(x,a)\leq u_\tau(x)\leq q_2^k(x,a).$$
\end{lemma}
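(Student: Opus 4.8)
The plan is to establish the two inequalities $q_1^k(x,a)\le u_\tau(x)$ and $u_\tau(x)\le q_2^k(x,a)$ separately via the comparison principle, working on the dual side where the equation \eqref{D5} is uniformly elliptic (since $|\xi|<\tau<1$) and the solution $u^*_\tau$ is a genuine convex function. First I would recall that the Legendre transform interchanges the roles of a convex function and its conjugate, and that our equation \eqref{teq} for a convex hypersurface is equivalent, under the transform $\xi = Du(x)$, $u^*(\xi) = \langle x,\xi\rangle - u(x)$, to the dual equation \eqref{D5}; this equivalence is exactly what sections 3.1 and 7.2 of \cite{RWX20} provide, so I may use it. Consequently it suffices to compare the dual functions: if $(q_2^k)^*$ denotes the Legendre transform of the supersolution $q_2^k(\cdot,a)$, then $q_2^k$ being a supersolution of \eqref{teq} translates into $(q_2^k)^*$ being a \emph{subsolution} of the dual operator $F_*$ in \eqref{D5}, and symmetrically $(q_1^k)^*$ is a supersolution of the dual operator. (The transform reverses the sub/super relationship because of the concavity structure of $F_*$ in the Hessian, together with the monotonicity properties recorded in the construction of the barriers in Section 3.1.)

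Next I would run the comparison argument on $B_\tau$. For the upper bound $u_\tau\le q_2^k$: on the boundary $\partial B_\tau$ we have $u^*_\tau = (q_1^k)^*$, and since $q_1^k\le q_2^k$ pointwise we get $(q_1^k)^*\ge (q_2^k)^*$, hence $u^*_\tau\ge (q_2^k)^*$ on $\partial B_\tau$; in the interior $u^*_\tau$ solves the equation while $(q_2^k)^*$ is a subsolution, so the maximum principle for the concave elliptic operator $F_*$ forces $u^*_\tau\ge (q_2^k)^*$ throughout $B_\tau$. Undoing the Legendre transform (which is order-reversing) gives $u_\tau\le q_2^k$ on $Du^*_\tau(B_\tau)$. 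For the lower bound $q_1^k\le u_\tau$: on $\partial B_\tau$ we have equality $u^*_\tau = (q_1^k)^*$, and $(q_1^k)^*$ is a supersolution of $F_*$ while $u^*_\tau$ is a solution, so again the comparison principle yields $u^*_\tau\le (q_1^k)^*$ in $B_\tau$, which transforms back to $q_1^k\le u_\tau$. One should be slightly careful that the domains match up: the inequalities between $u_\tau$ and $q_i^k$ are asserted only on the image set $Du^*_\tau(B_\tau) = Dq_1^k(\{\text{relevant region}\})$, and on that set all three functions are finite and smooth, so the pointwise comparison of gradients and of the functions themselves is legitimate.

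The main obstacle I expect is the careful bookkeeping of how the sub/supersolution property is transported across the Legendre transform, especially at the boundary of the domain $B_\tau$ where $q_1^k$ and its dual may only be Lipschitz rather than smooth (recall $q_1^k$ is a sup of smooth functions and $q_2^k$ an inf, so they are merely semiconvex/semiconcave). To handle this I would interpret all the differential inequalities in the viscosity sense, note that $F_*$ is degenerate-elliptic and that the viscosity comparison principle applies to \eqref{D5} on $B_\tau$ with continuous boundary data, and invoke the fact — used implicitly in \cite{RWX20} — that a sup (resp.\ inf) of subsolutions (resp.\ supersolutions) of a concave elliptic equation is again a subsolution (resp.\ supersolution), so that $q_1^k$ is a genuine viscosity subsolution and $q_2^k$ a genuine viscosity supersolution of \eqref{teq}, hence their transforms have the stated properties for $F_*$. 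With that in hand the comparison is routine, and the asymptotic matching of $q_1^k$ and $q_2^k$ stated at the end of Section 3.1 is what will later let one pass $\tau\to 1$ and extract the solution with the prescribed asymptotic behavior, though that step belongs to the proof of Theorem \ref{es-th1} rather than to this lemma.
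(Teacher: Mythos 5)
Your overall strategy—run a comparison argument for the dual Dirichlet problem \eqref{D5} and then transport the inequality back through the Legendre transform—is indeed the intended one (the paper defers the details to Section~7.3 of \cite{RWX20}, where exactly this kind of primal/dual comparison is carried out). Your identification of how sub/supersolutions of \eqref{teq} correspond to super/subsolutions of \eqref{D5} is also correct: since $F_*=\sigma_k^{-1/k}(\kappa)$ when evaluated on the dual matrix, the reciprocal flips the inequality, so $q_1^k$ a subsolution of \eqref{teq} makes $(q_1^k)^*$ a supersolution of \eqref{D5}, and similarly for $q_2^k$. Your remarks about viscosity interpretation for the sup/inf barriers are also sensible. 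The upper bound is fine: from $u^*_\tau\ge(q_2^k)^*$ in $B_\tau$ and $x=Du^*_\tau(\xi_0)$ one gets $u_\tau(x)=x\cdot\xi_0-u^*_\tau(\xi_0)\le x\cdot\xi_0-(q_2^k)^*(\xi_0)\le q_2^k(x)$, the last step being Fenchel--Young, which holds for any function.

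However, there is a genuine gap in your argument for the lower bound $q_1^k\le u_\tau$. From $u^*_\tau\le(q_1^k)^*$ on $B_\tau$ you claim this "transforms back" to $q_1^k\le u_\tau$, but the Legendre transform is taken over the \emph{restricted} domain $B_\tau$, so what you actually obtain is
\[
u_\tau(x)\;=\;\sup_{\xi\in B_\tau}\bigl(x\cdot\xi-u^*_\tau(\xi)\bigr)\;\ge\;\sup_{\xi\in B_\tau}\bigl(x\cdot\xi-(q_1^k)^*(\xi)\bigr),
\]
and the right-hand side equals $q_1^k(x)$ only when the supremum over all of $\R^n$ is attained inside $B_\tau$, i.e.\ only when $Dq_1^k(x)\in B_\tau$. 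But the hypothesis $x\in Du^*_\tau(B_\tau)$ says $Du_\tau(x)\in B_\tau$, which is a different set of points, and your parenthetical claim $Du^*_\tau(B_\tau)=Dq_1^k(\{\text{relevant region}\})$ is asserted rather than proved. Indeed, without using the PDE at all, one can write down convex $u^*_\tau\le(q_1^k)^*$ on $B_\tau$ with equality on $\partial B_\tau$ for which the pulled-back inequality $q_1^k\le u_\tau$ fails on part of $Du^*_\tau(B_\tau)$; so the step is not a formal consequence of order-reversal plus the dual comparison, and your appeal to "finiteness and smoothness" of the three functions does not repair it. To close the gap one must exploit the specific structure—that $q_1^k$ is the upper envelope of translates of the exact radial soliton $z_a^k$, that the Dirichlet data of \eqref{D5} is precisely $(q_1^k)^*$, and the resulting control on boundary gradients—so that the relevant dual point does land in $B_\tau$; this is precisely the extra bookkeeping that \cite{RWX20} supplies and that your sketch leaves unverified.
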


\begin{lemma}
\label{lemm81}
Let $\Omega\subset \R^n$ be a bounded open set. Let $u, \bar{u}, \Psi:\Omega\rightarrow\mathbb{R}^n$ be strictly spacelike, i.e.
$$|Du|,|D\uu|,|D\Psi|<1.$$
Assume that $u$ is strictly convex and $u\leq\bar{u}$ in $\Omega.$ Also
assume that near $\partial\Omega,$ we have $\Psi>\bar{u}.$
Consider the set where $u>\Psi.$ For every $x$ in that set,  the following gradient estimate for $u$ holds:
\[\frac{1}{\sqrt{1-|Du|^2}}\leq\frac{1}{u(x)-\Psi(x)}\cdot\sup\limits_{\{u>\Psi\}}\frac{\bar{u}-\Psi}{\sqrt{1-|D\psi|^2}}.\]
\end{lemma}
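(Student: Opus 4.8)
The plan is to prove the gradient estimate in Lemma~\ref{lemm81} by a barrier/comparison argument exploiting the convexity of $u$. The geometric idea: at a point $x_0$ where $u(x_0)>\Psi(x_0)$, convexity forces $u$ to lie above its supporting hyperplane $L$ at $x_0$; this supporting plane has slope $Du(x_0)$, and since $|Du(x_0)|<1$ the plane is spacelike. Tracking $L$ in the direction $e=Du(x_0)/|Du(x_0)|$ (the steepest ascent direction), $L$ grows at rate $|Du(x_0)|$, and $L$ stays below $u$, hence below $\bar u$ in $\Omega$, as long as we remain in $\Omega$. But near $\partial\Omega$ we have $\bar u<\Psi$, so somewhere along this ray the affine function $L$ must be overtaken; combining this with the hypothesis $u\le\bar u$ and the upper bound on $\bar u-\Psi$ controls how far $|Du(x_0)|$ can be from $1$.

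Concretely, first I would fix $x_0\in\{u>\Psi\}$ and write $\ell(x)=u(x_0)+\langle Du(x_0),x-x_0\rangle$ for the supporting affine function, so $u(x)\ge\ell(x)$ for all $x\in\Omega$ by convexity. Set $\theta=|Du(x_0)|$ and $e=Du(x_0)/\theta$ (if $Du(x_0)=0$ the estimate is trivial since $1/\sqrt{1-|Du|^2}=1\le$ the right side once we check positivity). Consider the ray $x(s)=x_0+se$ for $s\ge 0$, and let $s^*$ be the first exit time from $\Omega$ (finite since $\Omega$ is bounded). Along this ray $\ell(x(s))=u(x_0)+\theta s$. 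Near $\partial\Omega$ we have $\Psi>\bar u\ge u\ge\ell$ there; but we also need the comparison to bite before exiting, so I would instead argue at the exit point itself: at $x(s^*)\in\partial\Omega$, continuity gives $\Psi(x(s^*))\ge\bar u(x(s^*))\ge\ell(x(s^*))=u(x_0)+\theta s^*$, while on the other hand $\Psi(x(s))-\Psi(x_0)\le s\,\sup|D\Psi|$, hence $\Psi(x(s^*))\le\Psi(x_0)+s^*\sup|D\Psi|<\Psi(x_0)+s^*$. Rearranging,
\[
\theta s^* \le \Psi(x(s^*))-u(x_0)\le \Psi(x_0)+s^*\sup|D\Psi|-u(x_0),
\]
but this particular chain needs the right-hand quantity packaged correctly; the cleaner route is to compare the \emph{rate} at which $\ell-\Psi$ and its initial deficit interact: since $\ell(x_0)-\Psi(x_0)=u(x_0)-\Psi(x_0)>0$ and $\ell-\Psi$ must become negative before $x$ leaves $\{u>\Psi\}$ (because $\Psi>\bar u\ge u\ge\ell$ near $\partial\Omega$), and since $\frac{d}{ds}(\ell-\Psi)(x(s))\ge \theta-|D\Psi|$ pointwise, we get that if $\theta\ge|D\Psi|$ everywhere on the relevant segment the quantity $\ell-\Psi$ can only increase — a contradiction unless we can extract the bound from the fact that $u-\Psi\le\bar u-\Psi$ and $\sqrt{1-|D\Psi|^2}$ enters. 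The precise bookkeeping: at the last point $x_1$ on the segment still in $\{u>\Psi\}\cap\Omega$, one has $u(x_1)-\Psi(x_1)\le\sup_{\{u>\Psi\}}(\bar u-\Psi)$, and traveling back from $x_1$ to $x_0$ the affine function drops by $\theta|x_1-x_0|$ while $\Psi$ changes by at most $|D\Psi|\,|x_1-x_0|\le |x_1-x_0|$, yielding $u(x_0)-\Psi(x_0)\le \ell(x_0)-\Psi(x_0)$ bounded in terms of $\theta$, $|x_1-x_0|$, and $\sup(\bar u-\Psi)$; solving for $\theta$ and using $1/\sqrt{1-\theta^2}\le$ (geometric factor)$\cdot$(that bound)$/(u(x_0)-\Psi(x_0))$ gives the claimed inequality, with the $\sqrt{1-|D\Psi|^2}$ appearing from optimizing the slack between the slope-$1$ light cone and the slope-$|D\Psi|$ graph of $\Psi$.

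The main obstacle — and the place I expect to spend the most care — is getting the constant exactly right, i.e.\ producing the factor $\sup_{\{u>\Psi\}}\frac{\bar u-\Psi}{\sqrt{1-|D\Psi|^2}}$ rather than merely $\sup(\bar u-\Psi)$ times something. This requires choosing the ray direction and the comparison cone optimally: instead of the unit direction $e$, one should foliate by the light cone from $(x_0,\ell(x_0))$ and note that the first time the supporting plane of $u$ can be "caught" by a translate of the graph of $\Psi$ is governed by the Lorentzian angle between them, which is exactly what $1/\sqrt{1-|D\Psi|^2}$ measures; equivalently, one compares $u$ with the function $\Psi$ plus a suitable cone and uses that near $\partial\Omega$ this cone-modified barrier sits above $\bar u\ge u$. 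A secondary technical point is ensuring the exit-point/last-point arguments are valid when $\{u>\Psi\}$ is disconnected or touches $\partial\Omega$; here the hypothesis $\Psi>\bar u\ge u$ near $\partial\Omega$ guarantees the connected component of $\{u>\Psi\}$ containing $x_0$ is compactly contained in $\Omega$, so all the one-dimensional restriction arguments along segments are legitimate. No compactness or PDE input is needed beyond these elementary convexity facts, so once the cone geometry is set up correctly the estimate follows by a short computation.
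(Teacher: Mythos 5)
Your geometric setup is the right one and matches the standard Treibergs-type argument: fix $x_0\in\{u>\Psi\}$, take the supporting affine function $\ell(x)=u(x_0)+\langle Du(x_0),x-x_0\rangle$, follow the ray $x(s)=x_0+se$ with $e=Du(x_0)/|Du(x_0)|$, use $u\geq\ell$ by convexity and $\Psi>\bar u\geq u\geq\ell$ near $\partial\Omega$ to guarantee a first crossing $s_+$ with $\ell(x(s_+))=\Psi(x(s_+))$, and observe that $(0,s_+)$ lies in $\{\ell>\Psi\}\subset\{u>\Psi\}$. All of that is correct, as is the pointwise derivative bound $g'(s)\geq\theta-|D\Psi(x(s))|$ for $g(s):=(\ell-\Psi)(x(s))$ with $\theta=|Du(x_0)|$.

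But the ``precise bookkeeping'' paragraph does not close, and you essentially say so yourself. Integrating $g'\geq\theta-|D\Psi|$ from $0$ to $s_+$ and using $g(s_+)=0$ together with $|D\Psi|\leq\sup|D\Psi|$ only yields $u(x_0)-\Psi(x_0)\leq(\sup|D\Psi|-\theta)\,s_+$. This is a \emph{lower} bound on $s_+$; there is no control on $s_+$ from above, so ``solving for $\theta$'' does not produce the claimed inequality, and the crude version gives at best $\theta\leq\sup|D\Psi|$, not a bound by the stated constant. Likewise, ``$u(x_1)-\Psi(x_1)\leq\sup(\bar u-\Psi)$'' is vacuous at the crossing point since $u(x_1)=\Psi(x_1)$. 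The ``light-cone optimization'' remark points in the right direction but is never turned into an inequality.

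The single missing step is to combine your derivative bound with a \emph{pointwise amplitude} bound along the ray. For $s\in(0,s_+)$ one has $x(s)\in\{u>\Psi\}$, so with $M:=\sup_{\{u>\Psi\}}\frac{\bar u-\Psi}{\sqrt{1-|D\Psi|^2}}$ and $q(s):=|D\Psi(x(s))|$,
\[
g(s)\leq\bigl(\bar u-\Psi\bigr)(x(s))\leq M\sqrt{1-q(s)^2},\qquad\text{hence}\qquad q(s)\leq\sqrt{1-g(s)^2/M^2}.
\]
Feeding this back into $g'\geq\theta-q$ gives the closed differential inequality $g'(s)\geq\theta-\sqrt{1-g(s)^2/M^2}$. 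Now the monotonicity argument finishes it: if $g(0)>M\sqrt{1-\theta^2}$ then $\sqrt{1-g(0)^2/M^2}<\theta$, so $g'(0)>0$, and by the same reasoning $g$ stays above $M\sqrt{1-\theta^2}$ and keeps strictly increasing on $[0,s_+)$; this contradicts $g(s_+)=0$. Therefore $g(0)=u(x_0)-\Psi(x_0)\leq M\sqrt{1-|Du(x_0)|^2}$, which is exactly the stated estimate (and the $Du(x_0)=0$ case follows trivially from $M\geq\bar u(x_0)-\Psi(x_0)\geq u(x_0)-\Psi(x_0)$). Without the coupling between the pointwise amplitude bound $g\leq M\sqrt{1-q^2}$ and the slope bound $g'\geq\theta-q$, the sharp constant $\sup(\bar u-\Psi)/\sqrt{1-|D\Psi|^2}$ cannot be extracted, and this coupling is absent from your write-up.
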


For any compact convex domain $\K$,  let $2\delta=\min_{\K}(q_1^k(x,a)-q_1^k(x,a+100))$. Define
$$\Psi(x)=q_1^k(x, a+100)+\delta.$$
It is clear that for sufficiently large $|x|$,   we have $\Psi(x)>q_2(x)$.
On the other hand, we have $\Psi(x)<q_1(x)$ in $\K$.
By applying Lemma \ref{lemm81}, we obtain the local $C^1$ estimate.
Moreover, we can  establish the following Pogorelov type local $C^2$ estimate
for translating solitons.
\begin{lemma}
\label{lc2lem1}
Let $u$ be the solution of \eqref{teq} defined on $\Omega$.
For any given $s>\min\limits_{\R^n}u(x)+1,$ assume that $u|_{\T\Omega}>s.$ Let $\kappa_{\max}(x)$
 denote the largest principal curvature of $\M_{u}=\{(x, u(x))|x\in\Omega\}$ at $x$.
Then, we have
\[\max\limits_{\M_{u}}(s-u)\kappa_{\max}\leq C_1.\]
Here, $C_1$ only depends on the local $C^1$ estimate of $u$. More specifically, $C_1$ depends on the upper bound of
$-\langle \nu, E\rangle.$
\end{lemma}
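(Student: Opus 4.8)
The plan is to run the classical Pogorelov argument adapted to the spacelike setting in Minkowski space, working with the graph function $u$ and the flow equation $\sigma_k^{1/k}(\kappa[\M_u]) = -\lambda\langle\nu_u,E\rangle$, where $-\langle\nu_u,E\rangle = (1-|Du|^2)^{-1/2}$ is controlled by hypothesis. First I would introduce the auxiliary test function
\[
W(x,\xi) = \log(s-u) + \log b_{\xi\xi} + \frac{\mu}{2}|Du|^2 + \nu u,
\]
where $b_{\xi\xi}$ denotes the second fundamental form in a unit direction $\xi$ (written in an orthonormal frame adapted to $\M_u$), and $\mu,\nu$ are large constants to be chosen. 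Since $u|_{\partial\Omega}>s$, the quantity $\log(s-u)$ forces the maximum of $W$ to be attained at an interior point $x_0$ in the direction $\xi_0$ of the largest principal curvature; at $x_0$ we may rotate so that the second fundamental form (equivalently the matrix $a^{ij}D_{ij}u$ after the spacelike conformal change) is diagonal and $\kappa_{\max}=b_{11}$.

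The key steps, in order, are: (i) compute $D_iW$ and $D_{ij}W$ at $x_0$ and contract with the linearized operator $F^{ij}_* := \partial F/\partial(a^{ik}u_{kl}a^{lj})$ coming from $\sigma_n/\sigma_{n-k}$ (this is the operator whose ellipticity and concavity properties are established in the cited work of Ren--Wang--Xiao, so I may invoke positivity of $\sum_i F^{ii}$ and concavity of $F_*$); (ii) differentiate the equation \eqref{teq} twice in the direction $\xi_0$ to produce a good term $-F^{ij,kl}_* b_{ij,1}b_{kl,1}\ge 0$ (concavity) and to express $F^{ij}_* D_{ij}b_{11}$ in terms of lower-order curvature quantities plus derivatives of the right-hand side $\lambda/\sqrt{1-|Du|^2}$, whose first and second derivatives are bounded by the assumed $C^1$ bound; (iii) absorb the bad third-order terms using the Cabré/Chou--Wang trick — split indices into those with $b_{ii}$ comparable to $b_{11}$ and those much smaller, and use the concavity term together with the Codazzi-type commutation relations to dominate $\sum_i F^{ii} (b_{11,i})^2/b_{11}^2$; (iv) use the term $\frac{\mu}{2}F^{ij}_* D_{ij}|Du|^2$, which in Minkowski space contributes a definite positive multiple of $\sum_i F^{ii} b_{ii}^2 \ge c\, b_{11}\sum_i F^{ii}$ (here the timelike sign and strict convexity are essential), to beat the remaining negative terms; (v) finally use $F^{ij}_* D_{ij}(\nu u) = \nu F^{ij}_* D_{ij} u$ together with the equation (which bounds $F^{ij}_* a_{ij}$ from below away from zero by $\lambda$ times the controlled normal term) and the term from $\log(s-u)$ to close the inequality and conclude $(s-u)b_{11}\le C_1$ at $x_0$, hence everywhere.

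The main obstacle I anticipate is step (iii)–(iv): controlling the third-order gradient terms of $b_{11}$ that are not directly absorbed by concavity. In the Minkowski/spacelike setting the commutation formulas pick up extra curvature terms (the ambient curvature is zero, which helps, but the Weingarten terms coming from the spacelike conformal factor $w=\sqrt{1-|D u|^2}$ and the matrix $a^{ij}$ do not) and one must be careful that the "good" term generated by $\frac{\mu}{2}|Du|^2$ genuinely dominates — this requires the strict convexity of $u$ (so that all $\kappa_i>0$ and $\sum F^{ii}b_{ii}^2$ is coercive in $b_{11}$) and the upper bound on $-\langle\nu,E\rangle$, exactly as stated in the hypothesis. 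Once the constants $\mu\gg\nu\gg 1$ are chosen in the right order, everything reduces to the elementary inequality bookkeeping that I would carry out in detail following Section 7.3 of \cite{RWX20}; the structure is identical, only the explicit lower-order terms coming from \eqref{teq} (rather than the self-expander equation) change, and those are harmless because of the $C^1$ bound.
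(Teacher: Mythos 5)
The paper itself gives no proof of Lemma~\ref{lc2lem1}: it is stated after the sentence ``Similar to the proof of Section 7.3 in \cite{RWX20}, we have the following local estimates,'' so the intended proof is a direct adaptation of the Pogorelov argument carried out there for the prescribed-curvature equation. Your outline is the same general strategy, and in that sense it matches the paper. Two remarks, one cosmetic and one substantive.

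On the cosmetic side, the stated dependence of $C_1$ on ``the upper bound of $-\langle\nu,E\rangle$'' signals that the auxiliary term in the test function should be built from $v:=-\langle\nu,E\rangle=(1-|Du|^2)^{-1/2}$ (or $\log v$), not from $\tfrac{\mu}{2}|Du|^2+\nu u$. Since $|Du|<1$ the latter is bounded and does not actually encode the gradient blow-up; moreover, the operator identities $F^{ij}\nabla_{ij}v$ in terms of $\sum F^{ii}b_{ii}^2$ (which drive the argument) are cleanest when you differentiate $v$ itself, because $v$ is a defining function of the spacelike condition and satisfies a nice Jacobi-type equation. This is a harmless difference of presentation, but to get constants depending only on $\sup v$ as claimed, you should switch to $v$.

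The substantive issue is the inequality you invoke in step~(iv):
\[
\sum_i F^{ii} b_{ii}^2 \;\ge\; c\,b_{11}\sum_i F^{ii}.
\]
This is false in general for concave $F$ with $b_{11}$ the largest eigenvalue. In that situation $F^{11}$ is the \emph{smallest} of the $F^{ii}$, and when $b_{11}\gg b_{22}\ge\cdots\ge b_{nn}$ you can have $F^{11}b_{11}^2\ll \sum_i F^{ii}$ (take, e.g., $b_{11}=M$ large, $b_{ii}=\varepsilon$ for $i\ge 2$ with $\sigma_k$ normalized; then $F^{11}=O(\varepsilon^{k-1})$ while $F^{22}=O(M\varepsilon^{k-2})$, so the left side is $O(\varepsilon^{k-1}M^2)$ and the right side is $O(M^2\varepsilon^{k-2})$, giving a ratio $O(\varepsilon)$). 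The actual argument in \cite{RWX20} does not rely on this naive bound; it works with the refined commutator/concavity structure (the Chou--Wang/Guan--Ren--Wang index splitting you correctly anticipate in step~(iii)), where the good quadratic third-order term $-F^{ij,kl}h_{ij;1}h_{kl;1}$ plus the $F^{11}(h_{11;i})^2/h_{11}^2$ terms are matched against the bad terms index by index, and the trace term $\sum F^{ii}$ is controlled separately via the equation. If you replace your single inequality in step~(iv) by that mechanism, the outline closes; as written, the stated inequality is a genuine gap, and you should not expect the ``elementary bookkeeping'' to rescue it in the form you wrote.
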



\section{The curvature flow}
In this section, we will prove Theorem \ref{theo1}.  Since our proof follows the paper \cite{WX22-2}, we will only give the differences in the argument. Therefore,  the proof presented here is a sketch and one may find the detail in \cite{WX22-2}.

Io establish the solvability and convergence of the flow, we will examine the solvability of the equation satisfied by $u^*$, which is the Legendre transform of
$u$. Specifically, we will focus on the approximate problem in $B_r$, $r<1$.
\begin{eqnarray}\label{ubr2}
	\left\{
	\begin{aligned}
		 (u_r^*)_t&=-(C_n^k)^{-1/k}F_*^{-1}(w^*\gamma^*_{ik}u^*_{kl}\gamma^*_{lj})w^*\ \ \   \mathrm{in}\  B_r\times(0,T]\\
	 u_r^*&=u^*_0-at \  \ \ \ \ \ \ \ \ \ \ \ \ \ \  \ \ \ \ \ \ \ \ \ \ \ \ \ \ \ \ \  \mathrm{on}\  \partial B_r\times(0,T],\\
		u_r^*(0)&=u^*_0\ \ \ \ \ \ \ \ \ \ \ \ \ \ \ \ \ \ \ \ \ \ \ \ \ \ \ \ \ \ \ \ \ \ \ \ \ \ \  \mathrm{in}\ B_r\times {0}
	\end{aligned}
	\right.
\end{eqnarray}
where \[F_*=\left(\frac{\sigma_n}{\sigma_{n-k}}\right)^{\frac{1}{k}}(w^*\gamma^*_{ik}u^*_{kl}\gamma^*_{lj}), \w=\sqrt{1-|\xi|^2}, \gas_{ij}=\delta_{ij}-\frac{\xi_i\xi_j}{1+\w}, \us_{kl}=\frac{\p^2u^*}{\p\xi_k\p\xi_l}.\]

\subsection{$C^0$ estimates}
By scaling, the assumption \eqref{assume} becomes
$$0<\sigma_k^{\frac{\alpha}{k}}(\kappa[M_{u_0}])<-a(C_n^k)^{1/k}\langle\nu_{u_0}, E\rangle.$$
Therefore, $u_0$ is the sup solution of \eqref{teq}.
From the previous section, we know that \eqref{teq} has a smooth solution $\underline{u}$. Then we have $\underline{u}\leq u_0$.
Moreover, both $u_0$ and $\underline{u}$ satisfy the  asymptotic behavior
$$\underline{u},u_0 \rightarrow |x|+\varphi\left(\frac{x}{|x|}\right),\ \text{as} \ |x|\rightarrow \infty.$$

Then,  $\underline{u}+at, u_0+at$ are the sub solution and sup solution of \eqref{ge}, respectively. Therefore, we have
$\underline{u}^*-at, u_0^*-at$ are the sup solution and sub solution of \eqref{ubr2}, where $\underline{u}^*,\bar{u}^*$ are the Legendre transform of $\underline{u},u_0$.  Thus, we have
$$u_0^*-at\leq u^*_r\leq \underline{u}^*-at.$$

\subsection{The  bound of $F_*$}
Consider the hyperplane $\mathbb{P}:=\{X=(x_1, \cdots, x_{n}, x_{n+1}) |\, x_{n+1}=1\}$ and  the projection of
$\mathbb{H}^n(-1)$ from the origin into $\mathbb{P}.$ Then $\mathbb{H}^n(-1)$ is mapped in
a one-to-one fashion onto an open unit ball $B_1:=\{\xi\in\R^n |\, \sum\xi^2_k<1\}.$ The mapping
$P$ is given by
\[P: \mathbb{H}^n(-1)\rightarrow B_1;\,\,(x_1, \cdots, x_{n+1})\mapsto (\xi_1, \cdots, \xi_n),\]
where $x_{n+1}=\sqrt{1+x_1^2+\cdots+x_n^2},$ $\xi_i=\frac{x_i}{x_{n+1}}.$ Set $U_r:=P^{-1}(B_r)\times(0, T]$.
For $u^{*}_r$ satisfying \eqref{ubr2}, we define $v_r=\frac{u^{*}_r}{w^*}$. Then a straight forward calculation shows that $v_r$ satisfies
\be\label{sap3.1}
\left\{
\begin{aligned}
(v_r)_t&=-(C_n^k)^{-1/k}F_*^{-1}(\Lambda_{ij}):=\tilde{G}(\Lambda_{ij})\,\,&\mbox{in $U_r\times(0, T]$}\\
v_r(\cdot, t)&=\frac{u_0^*-at}{\sqrt{1-r^2}}\,\,&\mbox{on $\p U_r\times[0, T]$}\\
v_r(\cdot, 0)&=u_0^*x_{n+1}\,\,&\mbox{on $U_r\times\{0\},$}
\end{aligned}
\right.
\ee
where $\Lambda_{ij}=\bn_{ij}v_r-v_r\delta_{ij},$ and $\bn$ is the Levi-Civita Connection of the hyperbolic space.

\begin{lemma}
\label{sap-lem3.1}
Assume  $v$ is a solution of \eqref{sap3.1}. Then we have $$\frac{1}{C_3x_{n+1}}>F_*>\frac{1}{C_2 x_{n+1}}\,\, \mbox{on $\bar{U}_r\times(0, T]$.}$$
Here,  $C_2$ is a uniform constant and $C_3$ is a constant depending on the lower bound of $\sigma_k^{\frac{1}{k}}(\ka[\M_0])$ on $\bar{U}_r$.
\end{lemma}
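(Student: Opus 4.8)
The plan is to multiply the claimed bound on $F_*$ by $x_{n+1}$, turning it into a two‑sided estimate for the normal speed of the flow, and then to exploit that on $\mathbb H^n(-1)$ this speed solves a \emph{linear parabolic equation with no zeroth order term}; the conclusion then follows from the maximum principle, whose boundary data is uniform in $r$ on the relevant side. Concretely, recalling that $x_{n+1}=1/w^*=-\langle\nu,E\rangle$, set
\[\psi:=-\frac{(v_r)_t}{x_{n+1}}=(C_n^k)^{-1/k}F_*^{-1}w^*,\qquad\text{so that}\qquad F_*=\frac{(C_n^k)^{-1/k}}{\psi\,x_{n+1}}.\]
By the Legendre duality underlying \eqref{sap3.1} one has $F_*=\sigma_k^{-1/k}(\kappa[\M_u])$, so $\psi$ is precisely the normal speed $\sqrt{1-|Du|^2}\big(\sigma_k(\kappa[\M_u])/C_n^k\big)^{1/k}$ of \eqref{ge} written in the $(\xi,t)$ variables; in particular $\psi|_{t=0}=(C_n^k)^{-1/k}\sigma_k^{1/k}(\kappa[\M_0])\sqrt{1-|\xi|^2}$. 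Hence proving $\tfrac{1}{C_3x_{n+1}}>F_*>\tfrac{1}{C_2x_{n+1}}$ is equivalent to proving $c_3\le\psi\le c_2$ on $\bar U_r$, with $c_2$ to be uniform and $c_3$ allowed to depend on $\bar U_r$.

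First I would compute the evolution of $\psi$. Differentiating \eqref{sap3.1} in $t$ (the metric is static, so $\bn$ and $\partial_t$ commute) and setting $\tilde{G}^{ij}=\partial\tilde{G}/\partial\Lambda_{ij}$ — positive definite, since $\tilde{G}=-(C_n^k)^{-1/k}F_*^{-1}$ is increasing in $\Lambda$ as long as $\M_{u_r}$ stays strictly convex, which is preserved along the flow (cf. the convexity estimates of \cite{WX22-2}) — one gets
\[\partial_t(v_r)_t=\tilde{G}^{ij}\big(\bn_{ij}(v_r)_t-(v_r)_t\,\delta_{ij}\big).\]
Substituting $(v_r)_t=-\psi\,x_{n+1}$ and using that $x_{n+1}$ is $t$‑independent and satisfies the standard identity $\bn_{ij}x_{n+1}=x_{n+1}\,\delta_{ij}$ on $\mathbb H^n(-1)$ — it is the restriction of the linear ambient function $-\langle X,E\rangle$ — the two terms proportional to $\psi\,\delta_{ij}$ cancel, and dividing by $x_{n+1}$ leaves
\[\partial_t\psi=\tilde{G}^{ij}\,\bn_{ij}\psi+\frac{2}{x_{n+1}}\,\tilde{G}^{ij}\,\bn_ix_{n+1}\,\bn_j\psi,\]
which is uniformly parabolic and linear, with vanishing zeroth order term.

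Then I would apply the maximum principle to this equation: $\psi$ attains its maximum and minimum on $\bar U_r$ over the parabolic boundary, i.e. at $t=0$ or on the lateral boundary $P^{-1}(\partial B_r)\times(0,T]$. On the lateral boundary the condition $v_r=(u_0^*-at)/\sqrt{1-r^2}$ gives $(v_r)_t\equiv-a/\sqrt{1-r^2}$ and $x_{n+1}\equiv1/\sqrt{1-r^2}$, hence $\psi\equiv a$. At $t=0$, $\psi|_{t=0}$ is continuous and strictly positive on the compact set $\bar U_r$, hence bounded below by a positive constant controlled by $\min_{\bar U_r}\sigma_k^{1/k}(\kappa[\M_0])$; on the other hand the hypothesis $0<\sigma_k(\kappa[\M_0])\le-C\langle\nu,E\rangle=C/\sqrt{1-|\xi|^2}$ gives the uniform upper bound
\[(C_n^k)^{1/k}\psi|_{t=0}\le\Big(\frac{C}{\sqrt{1-|\xi|^2}}\Big)^{1/k}\sqrt{1-|\xi|^2}=C^{1/k}(1-|\xi|^2)^{\frac{k-1}{2k}}\le C^{1/k}.\]
Thus $c_3:=\min\{a,\,(C_n^k)^{-1/k}\inf_{\bar U_r}\sigma_k^{1/k}(\kappa[\M_0])\sqrt{1-r^2}\}\le\psi\le\max\{a,(C/C_n^k)^{1/k}\}=:c_2$ on $\bar U_r$, and reading this back through $F_*=(C_n^k)^{-1/k}/(\psi\,x_{n+1})$ gives the lemma with $C_2=(C_n^k)^{1/k}c_2$ a uniform constant and $C_3=(C_n^k)^{1/k}c_3$ depending only on the lower bound of $\sigma_k^{1/k}(\kappa[\M_0])$ on $\bar U_r$.

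The step I expect to be delicate is the one producing the equation for $\psi$: everything rests on the cancellation of the zeroth order terms, which is exactly what singles out the normal speed $-(v_r)_t/x_{n+1}$ — rather than $(v_r)_t$ itself — as the right auxiliary quantity, and what keeps the boundary data from degenerating as $r\to1$. Indeed, the evolution of $(v_r)_t$ carries a term $-(\operatorname{tr}\tilde{G}^{ij})(v_r)_t$, and its value $-(C_n^k)^{-1/k}\sigma_k^{1/k}(\kappa[\M_0])$ at $t=0$ is not uniformly bounded in $r$ (the hypothesis only bounds $\sigma_k(\kappa[\M_0])$ by $C/\sqrt{1-|\xi|^2}$), so that route would not recover the uniform constant $C_2$. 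The remaining ingredient, routine here, is the preservation of strict convexity of $\M_{u_r}$ so that $\tilde{G}$ stays elliptic, available as in \cite{WX22-2}.
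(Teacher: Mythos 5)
Your proof is correct and follows essentially the same route as the paper: you consider the quantity $\psi=-\tilde{G}/x_{n+1}$ (the paper's $-\tilde{G}/x_{n+1}$), derive from the time-differentiated equation that it satisfies a parabolic equation with no zeroth-order term thanks to the cancellation $\bn_{ij}x_{n+1}=x_{n+1}\delta_{ij}$, apply the maximum principle to locate extrema on the parabolic boundary, and evaluate the boundary data (exactly $a$ on the lateral boundary, two-sided bounds at $t=0$ from the hypothesis on $\sigma_k(\kappa[\M_0])$). This is the paper's argument, stated in slightly different notation and with a few more explicit details.
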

\begin{proof}
Since
\[\tilde{G}_t=\tilde{G}^{ij}((v_t)_{ij}-v_t\delta_{ij})=\tilde{G}^{ij}(\bn_{ij}\tilde{G}-\tilde{G}\delta_{ij}),\]
we have $\mathcal{L}\tilde{G}=-\tilde{G}\sum_{i}\tilde{G^{ii}},$ where $\mathcal{L}:=\frac{\p}{\p t}-\tilde{G}^{ij}\bn_{ij}.$
It is clear that $\mathcal{L}x_{n+1}=-x_{n+1}\sum_i\tilde{G}^{ii}$.
Therefore, we get
$$\mathcal{L}\frac{-\tilde{G}}{x_{n+1}}=2\tilde{G}^{ij}\frac{(x_{n+1})_i}{x_{n+1}}\left(\frac{-\tilde{G}}{x_{n+1}}\right)_j.$$
Applying the maximum principal, we find  that $F_*x_{n+1}$ achieves its maximum and minimum at the parabolic boundary.
In view of the short time existence theorem, we obtain on $\p U_r\times (0, T]$:
\[(C^k_n)^{1/k}F_*=\frac{\sqrt{1-r^2}}{a}.\]
Therefore, on $\p U_r\times(0,T]$, we get  $$(C^k_n)^{1/k}F_*x_{n+1}=\frac{1}{a}.$$
On $U_r\times\{0\}$, we have
$$c_0x^{-1}_{n+1}<F_*^{-1}x_{n+1}^{-1}<C.$$
This concludes the argument.
\end{proof}

\subsection{$C^1$ estimates}

\begin{lemma}
\label{sap-lem4.1}
Let $u^*_r$ be a solution of \eqref{ubr2}. Then
$|Du_r^*|\leq C$, where $C=C(\M_0, r, t, T)$ is a constant in $\bar{B}_r \times[0, T]$.
\end{lemma}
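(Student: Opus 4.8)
The plan is to establish the $C^1$ estimate for $u_r^*$ on $\bar B_r\times[0,T]$ by combining the already-proven two-sided bound on $F_*$ (Lemma \ref{sap-lem3.1}) with a barrier argument at the lateral boundary $\partial B_r$ and an interior convexity/monotonicity argument. Recall that $u_r^*$ is the Legendre transform of $u_r$, so $D u_r^*(\xi)$ is precisely the point $x$ at which $D u_r(x)=\xi$; thus bounding $|Du_r^*|$ on $B_r$ is equivalent to showing that the spatial projection $Du_r^*(B_r)$ of the graph $\M_{u_r(\cdot,t)}$ stays in a fixed compact set of $\mathbb R^n$ depending only on $\M_0,r,t,T$. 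First I would use the $C^0$ bounds from Section 4.1, namely $u_0^*-at\le u_r^*\le \underline u^*-at$ on $\bar B_r\times[0,T]$, together with the boundary condition $u_r^*=u_0^*-at$ on $\partial B_r$, to control $u_r^*$ from both sides; these sandwich bounds are what feed the barrier construction.

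The key steps, in order, are as follows. First, the boundary gradient estimate: on $\partial B_r\times(0,T]$ one differentiates the fixed boundary data $u_0^*-at$ tangentially, so only the normal derivative of $u_r^*$ must be controlled. Since $u_0^*$ and $\underline u^*$ are smooth and strictly convex (being Legendre transforms of the strictly spacelike, strictly convex $u_0$ and $\underline u$), and since $u_0^*-at$ and $\underline u^*-at$ serve as an ordered pair of sub/supersolution barriers that agree with $u_r^*$ on a neighborhood of the boundary in the first case, the comparison principle applied to the parabolic operator in \eqref{ubr2} pins the inward normal derivative between the corresponding normal derivatives of these two barriers, giving $|Du_r^*|\le C$ on $\partial B_r\times[0,T]$. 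Second, the interior estimate: differentiate equation \eqref{ubr2} in a spatial direction $\xi_e$; because $F_*^{-1}$ is a concave (or at least suitably monotone) function of the Hessian $u^*_{kl}$ on the positive cone, the differentiated quantity $\partial_e u_r^*$ satisfies a linear parabolic inequality, so by the parabolic maximum principle $\max_{\bar B_r\times[0,T]}|Du_r^*|$ is attained either on $\partial B_r\times[0,T]$ (handled above) or on $B_r\times\{0\}$ (where it is $|Du_0^*|\le C(\M_0)$ since $\M_0$ is strictly spacelike). Alternatively, and perhaps more cleanly, one uses that $u_r^*$ is convex in $\xi$ for each $t$ (this follows from the structure of the flow, as it does in \cite{WX22-2}): for a convex function on $B_r$ sandwiched between two smooth functions with controlled gradients, $|Du_r^*(\xi)|$ at an interior point is bounded in terms of the oscillation of $u_r^*$ over $B_r$ and the distance from $\xi$ to $\partial B_r$ — but to reach up to $\bar B_r$ one still needs the boundary gradient bound, which is why that step is essential.

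The main obstacle I expect is the boundary gradient estimate, specifically producing barriers at $\partial B_r$ that are \emph{admissible} for the fully nonlinear operator $F_*$ (i.e. whose associated matrix $w^*\gamma^*_{ik}(\cdot)_{kl}\gamma^*_{lj}$ lies in the positive cone where $(\sigma_n/\sigma_{n-k})^{1/k}$ is elliptic) while still dominating/being dominated by $u_r^*$ near the boundary with the right one-sided behavior. The natural candidates — perturbations of $u_0^*-at$ of the form $u_0^*-at\pm A\,d(\xi)$ or $u_0^*-at\pm A(\,|\xi|^2-r^2\,)$ with $d=\operatorname{dist}(\cdot,\partial B_r)$ — require choosing the constant $A$ large in terms of the lower bound of $F_*$ from Lemma \ref{sap-lem3.1} and the ellipticity constants, and one must check the sign of the resulting differential inequality; this is where the constant $C$'s dependence on $r,t,T$ (and on $\M_0$ through the bounds of Lemma \ref{sap-lem3.1}) enters. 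Since the excerpt says the proof follows \cite{WX22-2} and only differences need to be indicated, I would carry out the barrier construction in detail only where the forcing term $-at$ and the translating-soliton normalization change the computation, and otherwise cite the corresponding lemma of \cite{WX22-2}; the interior maximum-principle step is essentially verbatim from there.
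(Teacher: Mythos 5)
Your overall plan --- $C^{0}$ bounds from Section~4.1, a boundary gradient estimate via barriers, and an interior estimate from convexity --- is a reasonable reconstruction of the argument the paper cites from \cite{WX22-2}, and it captures the essential ideas. Since the paper itself gives only a one-line pointer to \cite{WX22-2}, I will focus on a few technical points that need care.

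First, the phrase that $u_0^{*}-at$ and $\underline u^{*}-at$ ``agree with $u_r^{*}$ on a neighborhood of the boundary in the first case'' is imprecise: $u_0^{*}-at$ agrees with $u_r^{*}$ \emph{on} $\partial B_r$ (it is the Dirichlet datum), not on a full neighborhood, while $\underline u^{*}-at$ does not agree with $u_r^{*}$ on $\partial B_r$ at all, so it only gives a one-sided $C^{0}$ pinching and cannot, by itself, bound the inward normal derivative from above. You recognize this and propose constructed barriers, which is the right move, but the candidate $\psi = u_0^{*}-at + A\bigl(r^{2}-|\xi|^{2}\bigr)$ runs into exactly the admissibility issue you worry about: its Hessian $(u_0^{*})_{kl}-2A\delta_{kl}$ leaves the positive-definite cone once $A$ is large, so the operator $F_{*}$ is no longer defined on it and the direct supersolution comparison cannot be made. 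One must either exploit the structure of $-F_{*}^{-1}$ (where the monotonicity reverses and a different cone is relevant), or choose a barrier that remains admissible while still matching the boundary data; this is the nontrivial step that \cite{WX22-2} handles and your sketch leaves open.

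Second, for the interior step, differentiating \eqref{ubr2} in a direction $\xi_{e}$ produces, because $w^{*}$ and $\gamma^{*}_{ij}$ depend explicitly on $\xi$, a linear parabolic equation for $\partial_{e}u_r^{*}$ with a nontrivial inhomogeneous (zeroth-order forcing) term. The plain parabolic maximum principle then does not place the maximum of $\partial_{e}u_r^{*}$ on the parabolic boundary without additional control of that term, so this route needs more work than you indicate. The convexity route is cleaner, and the sharp statement to invoke is not the ``oscillation over distance'' bound (which degenerates at the boundary, as you note) but the elementary fact that for a smooth convex function on a convex domain each directional derivative $\partial_{e}u_r^{*}$ is monotone along lines in direction $e$, hence attains its supremum on $\partial B_r$; combined with the boundary gradient estimate this immediately gives $|Du_r^{*}|\le C$ on all of $\bar B_r\times[0,T]$. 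With that replacement and a properly admissible upper barrier, your outline matches the intended proof.
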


 The proof is similar to \cite{WX22-2}. We only need to change the given boundary function to be $u_0^*-at$.

\subsection{$C^2$ estimates}

From now on, for our convenience, we will consider the solution of equation \eqref{sap3.1}. Assume $\{\tau_1,\cdots,\tau_n\}$ is the orthonormal frame of the boundary $\p U_r$, where $\tau_1,\cdots,\tau_{n-1}$ are the tangential vectors and $\tau_n$ is the unit interior normal vector.
 \begin{lemma}
\label{sap-lem5.1}
Let $v$ be the solution of \eqref{sap3.1}. Then the second order tangential derivatives on the boundary satisfy
$|\bn_{\al\beta} v|\leq C$ on $\p U_r\times (0, T]$ for $\al, \beta<n.$ Here $C$ depends on $\M_0, r,T$.
\end{lemma}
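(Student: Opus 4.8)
\textbf{Proof proposal for Lemma \ref{sap-lem5.1}.}

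The plan is to adapt the standard boundary second-derivative estimate for spacelike Hessian-type equations, following the argument in \cite{WX22-2} and \cite{RWX20}, with the only essential change being the time-dependent boundary data $\frac{u_0^*-at}{\sqrt{1-r^2}}$. First I would fix a boundary point $p\in\p U_r$ and a time $t_0\in(0,T]$ and work in the orthonormal frame $\{\tau_1,\dots,\tau_n\}$ described above, with $\tau_n$ the interior unit normal. Since the boundary datum $g(\xi,t):=\frac{u_0^*-at}{\sqrt{1-r^2}}$ is, for each fixed $t$, a smooth function on $\p U_r$ whose tangential derivatives are controlled independently of $t$ (the $t$-dependence is only through the additive constant $-at/\sqrt{1-r^2}$, which drops out under tangential differentiation), the pure tangential second derivatives $\bn_{\al\beta}v$ for $\al,\beta<n$ are immediately bounded: differentiating the boundary condition twice in tangential directions gives $\bn_{\al\beta}v = \bn_{\al\beta}g + (\text{terms involving }\bn_n v\text{ and the second fundamental form of }\p U_r)$, so everything reduces to the already-established $C^1$ bound of Lemma \ref{sap-lem4.1} together with the fixed geometry of the sphere $\p U_r\subset\mathbb{H}^n$.

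The substantive step is the mixed tangential-normal estimate, i.e. bounding $\bn_{\al n}v$ for $\al<n$ (these are usually lumped into "tangential derivatives on the boundary" since they are obtained from a barrier rather than from differentiating the PDE). Here I would use the standard tangential operator $T = \p_{\tau_\al} + \sum_{\beta<n} b_{\al\beta}(\xi)\,\p_{\tau_\beta}$ adapted so that $T$ is tangent to $\p U_r$, apply it to equation \eqref{sap3.1}, and then construct a barrier of the form $\pm T(v-g) \le A\,d + B\,d^2 + (\text{lower order})$ where $d$ is the distance to $\p U_r$ and $A,B$ are large constants. One feeds this into the linearized operator $\mathcal{L} = \p_t - \tilde G^{ij}\bn_{ij}$ (already used in Lemma \ref{sap-lem3.1}); the ellipticity of $\tilde G^{ij}$ — which is where the $k$-convexity and the bound $F_*\sim x_{n+1}^{-1}$ from Lemma \ref{sap-lem3.1} enter to control $\sum_i\tilde G^{ii}$ from below — lets one absorb the error terms coming from $T$ hitting the coefficients and from the commutator $[\mathcal{L},T]$. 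The maximum principle on $U_r\times(0,t_0]$ then gives $|\bn_{\al n}(v-g)|\le C$ at $p$, hence $|\bn_{\al n}v|\le C$ by the $C^1$ bound on $g$.

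The main obstacle I expect is the barrier construction for the mixed derivatives: one must choose the auxiliary function carefully so that $\mathcal{L}$ applied to it is sufficiently negative to dominate $|\mathcal{L}(\pm T(v-g))|$, and this requires a good lower bound on $\tilde G^{ij}\xi_i\xi_j$ near the boundary together with control of the largest term $\tilde G^{11}$ (in a frame diagonalizing $\bn_{ij}v$), exactly the kind of estimate that is delicate for the $\sigma_n/\sigma_{n-k}$ quotient. The time derivative contributes the harmless term $-a/\sqrt{1-r^2}$ from $\p_t(v-g)$, which is bounded and can be absorbed, so the parabolic nature of the problem causes no new difficulty beyond the elliptic case treated in \cite{RWX20}. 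Since all of this is, modulo the replacement of the boundary data, identical to \cite{WX22-2}, I would present only the modification and refer to \cite{WX22-2} for the remaining details.
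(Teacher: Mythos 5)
Your first paragraph already disposes of the lemma as stated: since $\alpha,\beta<n$ means both directions are tangent to $\partial U_r$, and $v$ agrees with the time-dependent boundary datum there, the ambient pure tangential Hessian of $v$ minus the extension $\ubar v$ reduces to $-II_{\alpha\beta}\,\bn_n(v-\ubar v)$ plus $\bn_{\alpha\beta}\ubar v$, which is controlled by the $C^1$ bound of Lemma~\ref{sap-lem4.1} and the fixed geometry of $\partial U_r$. That is precisely the paper's (one-line) argument, with the subsolution $\ubar v=\frac{u_0^*-at}{\sqrt{1-|\xi|^2}}$ of \eqref{eq2} playing the role of your extension $g$.

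One remark on scope: the mixed tangential--normal estimate you devote the bulk of the proposal to is \emph{not} part of this lemma; in this paper it is isolated as Lemma~\ref{sap-lem5.3}, with the barrier (your $h$-type function) supplied by Lemma~\ref{sap-lem5.2}. Your description of that barrier argument is sound and matches the paper's later lemmas, but presenting it here proves more than claimed. Restrict the write-up to the paragraph on pure tangential derivatives and defer the $T$-operator/barrier material to Lemmas~\ref{sap-lem5.2}--\ref{sap-lem5.3}.
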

The proof follows the same approach as in \cite{WX22-2}, except that the subsolution is defined by
\begin{eqnarray}\label{eq2}
\ubar{v}=\frac{u_0^*-at}{\sqrt{1-|\xi|^2}}.
\end{eqnarray}

Now, we will show that $|\bn_{\al n}v|$ is bounded. In the following we  denote the operator $\mathfrak{L}$ by
\be\label{operator-L}
\mathfrak{L}\phi:=\phi_t-\tilde{F}_v^{-2}\tilde{F}_v^{ij}\bn_{ij}\phi+\phi\tilde{F}_v^{-2}\sum_i\tilde{F}^{ii}_v
\ee
for any smooth function $\phi.$ Here, $\tilde{F}_v(\Lambda_{ij})=F_*(\Lambda_{ij}),$ $\tilde{F}^{ij}_v=\frac{\p\tilde{F}_v}{\p\Lambda_{ij}},$
and $\Lambda_{ij}=\bn_{ij}v-v\delta_{ij}.$

\begin{lemma}
\label{sap-lem5.2}
Let $v$ be a solution of \eqref{sap3.1}, and let $\ubar{v}$ be the subsolution of \eqref{sap3.1} defined by \eqref{eq2}.
Define
$h=(v-\ubar{v})+B\lt(\frac{1}{\sqrt{1-r^2}}-x_{n+1}\rt),$ where $B>0$ is a constant. Then for any given constant $B_1>0$, there exists a sufficiently large $B$ depending on $\M_0, r, T$, such that $\mathfrak{L}h>\frac{B_1}{\tilde{F}_v^2}\sum\tilde{F}^{ii}_v.$
\end{lemma}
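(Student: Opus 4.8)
The plan is to construct $h$ as a barrier from below for the mixed tangential-normal second derivatives near the boundary, following the standard Hopf-lemma-type construction adapted to the operator $\mathfrak{L}$. First I would record the action of $\mathfrak{L}$ on the basic building blocks. Since $v$ and $\ubar v$ both satisfy (or super-satisfy) the evolution equation and $\ubar v$ is the subsolution from \eqref{eq2}, a direct computation using the concavity of $F_*^{1/1}$ — more precisely the concavity of $\log F_*$ or the convexity properties of $\tilde F_v^{-1}$ — gives $\mathfrak{L}(v-\ubar v)\ge 0$, up to controllable error terms coming from the fact that $\ubar v$ is only a subsolution and not an exact solution; these error terms are bounded by $C(\M_0,r,T)\tilde F_v^{-2}\sum_i\tilde F_v^{ii}$ together with a term controlled by $\tilde F_v^{-2}$ (using the two-sided bound on $F_*$ from Lemma \ref{sap-lem3.1}, which makes $\tilde F_v^{-2}$ and $\sum_i\tilde F_v^{ii}$ comparable on $\bar U_r\times(0,T]$). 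The key auxiliary computation is $\mathfrak{L}x_{n+1}$: since $\bn_{ij}x_{n+1}=x_{n+1}\delta_{ij}$ in hyperbolic space, we get $\mathfrak{L}x_{n+1}=-x_{n+1}\tilde F_v^{-2}\sum_i\tilde F_v^{ii}+x_{n+1}\tilde F_v^{-2}\sum_i\tilde F_v^{ii}=0$; hence $\mathfrak{L}\bigl(\tfrac{1}{\sqrt{1-r^2}}-x_{n+1}\bigr)=\tfrac{1}{\sqrt{1-r^2}}\,\tilde F_v^{-2}\sum_i\tilde F_v^{ii}$, a strictly positive multiple of $\tilde F_v^{-2}\sum_i\tilde F_v^{ii}$.

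With these in hand, the estimate is immediate in structure: $\mathfrak{L}h=\mathfrak{L}(v-\ubar v)+B\,\mathfrak{L}\bigl(\tfrac{1}{\sqrt{1-r^2}}-x_{n+1}\bigr)\ge -C_0\tilde F_v^{-2}\sum_i\tilde F_v^{ii}-C_1'\tilde F_v^{-2}+\tfrac{B}{\sqrt{1-r^2}}\tilde F_v^{-2}\sum_i\tilde F_v^{ii}$, where $C_0,C_1'$ depend only on $\M_0,r,T$ through the $C^1$ and $F_*$ bounds already established (Lemmas \ref{sap-lem3.1} and \ref{sap-lem4.1}). Absorbing the $\tilde F_v^{-2}$ term into the $\tilde F_v^{-2}\sum_i\tilde F_v^{ii}$ term (legitimate because $\sum_i\tilde F_v^{ii}$ is bounded below by a positive constant — this itself follows from the upper bound on $F_*$ in Lemma \ref{sap-lem3.1} and the fact that $\tilde F_v^{ii}\ge$ some positive quantity for the $(\sigma_n/\sigma_{n-k})^{1/k}$ operator on the admissible cone), one obtains $\mathfrak{L}h\ge\bigl(\tfrac{B}{\sqrt{1-r^2}}-C_0-C_1'\bigr)\tilde F_v^{-2}\sum_i\tilde F_v^{ii}$, and choosing $B$ large enough that $\tfrac{B}{\sqrt{1-r^2}}-C_0-C_1'>B_1$ yields the claim $\mathfrak{L}h>\tfrac{B_1}{\tilde F_v^2}\sum_i\tilde F_v^{ii}$.

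I expect the main obstacle to be the sign control of $\mathfrak{L}(v-\ubar v)$, i.e. verifying that the discrepancy between the exact solution $v$ and the subsolution $\ubar v$ produces an error of the right form and sign. This requires using that $\ubar v=\frac{u_0^*-at}{\sqrt{1-|\xi|^2}}$ is genuinely a subsolution of \eqref{sap3.1} — which was the content of the construction preceding Lemma \ref{sap-lem5.1} — and combining it with the structural properties (ellipticity, concavity) of $\tilde F_v^{-1}$ as a function of $\Lambda_{ij}=\bn_{ij}v-v\delta_{ij}$, so that the comparison $\mathfrak{L}v\ge$ (the corresponding expression for $\ubar v$) holds pointwise up to the claimed error. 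The contributions from the hyperbolic geometry (the $-v\delta_{ij}$ terms, the curvature of the background) must be tracked carefully but are handled exactly as in \cite{WX22-2}; the only genuinely new input here is the boundary data $u_0^*-at$ in place of $u_0^*$, which changes $\ubar v$ only by the affine-in-$t$ shift already reflected in \eqref{eq2} and does not affect any of the spatial derivative estimates. The remaining steps — choosing the constants, absorbing lower-order terms — are routine.
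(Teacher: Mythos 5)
Your overall architecture is right and the conclusion is reachable along the lines you sketch, but a few of your claims about where the terms come from and what controls what are off, and your route differs from the paper's.

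First, the clean part: your computation that $\bn_{ij}x_{n+1}=x_{n+1}\delta_{ij}$ kills the zeroth-order term in $\mathfrak{L}$, so $\mathfrak{L}x_{n+1}=0$ and $\mathfrak{L}\bigl(\tfrac{1}{\sqrt{1-r^2}}-x_{n+1}\bigr)=\tfrac{1}{\sqrt{1-r^2}}\tilde F_v^{-2}\sum_i\tilde F_v^{ii}$, is correct and is the key structural point. Likewise, the lower bound $\sum_i\tilde F_v^{ii}\geq F_*(I)=\binom{n}{k}^{-1/k}$ does follow from concavity and degree-one homogeneity of $F_*$, as you say, and is what lets you absorb a constant into $\tilde F_v^{-2}\sum_i\tilde F_v^{ii}$.

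Where you are imprecise: the negative contribution in $\mathfrak{L}(v-\ubar v)$ does not, in fact, arise from ``$\ubar v$ being a subsolution rather than an exact solution.'' If you compute directly, using the degree-one homogeneity $\tilde F_v^{ij}\Lambda_{ij}=\tilde F_v$ and the flow equation $v_t=-(C_n^k)^{-1/k}\tilde F_v^{-1}$, you get $\mathfrak{L}v=-(1+(C_n^k)^{-1/k})\tilde F_v^{-1}$ exactly, while $\mathfrak{L}\ubar v=-ax_{n+1}-\tilde F_v^{-2}\tilde F_v^{ij}\ubar\Lambda_{ij}$, so that
$$\mathfrak{L}(v-\ubar v)=-(1+(C_n^k)^{-1/k})\tilde F_v^{-1}+ax_{n+1}+\tilde F_v^{-2}\tilde F_v^{ij}\ubar\Lambda_{ij},$$
where the last two terms are manifestly nonnegative (since $\ubar\Lambda>0$). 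The only negative piece is $-(1+(C_n^k)^{-1/k})\tilde F_v^{-1}$, and it is a structural artifact of the normalization of $\mathfrak{L}$ (the $\tilde F_v^{-2}\tilde F_v^{ij}$ factor versus the $(C_n^k)^{-1/k}\tilde F_v^{-2}\tilde F_v^{ij}$ in the linearization of $\tilde G$), not a subsolution discrepancy. Also, your remark that Lemma \ref{sap-lem3.1} makes $\tilde F_v^{-2}$ and $\sum_i\tilde F_v^{ii}$ ``comparable'' overstates what that lemma gives: it bounds $F_*$ from above and below, hence $\tilde F_v^{-2}$ from above and below, but $\sum_i\tilde F_v^{ii}$ is only bounded \emph{below} (by concavity); it can blow up as eigenvalues degenerate, which is precisely the scenario this lemma is a tool to preclude. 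Your argument doesn't actually need an upper bound on $\sum_i\tilde F_v^{ii}$, so this is a misstatement rather than a gap, but it's worth being careful about.

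On the comparison with the paper: the paper does not argue as you do. It follows \cite{WX22-2} and the key ``last step'' is a dichotomy on the size of $\tilde F_v$, namely $\tilde F_v>c_0^{-1}(1+\alpha)$ versus $\tilde F_v\le c_0^{-1}(1+\alpha)$, where $c_0^{-1}>\tilde F(\ubar\Lambda)$. The point of the dichotomy is to control the sign of the residual $\tilde F_v^{-2}\tilde F_v^{ij}\ubar\Lambda_{ij}+ax_{n+1}-(1+(C_n^k)^{-1/k})\tilde F_v^{-1}$ using the strict subsolution property: when $\tilde F_v$ is much bigger than $\tilde F(\ubar\Lambda)$, the bad $\tilde F_v^{-1}$ term is small; when $\tilde F_v$ is comparable to $\tilde F(\ubar\Lambda)$, $\tilde F_v$ is bounded above by a fixed constant and the $B$-term dominates. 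Your approach short-circuits this by invoking the uniform two-sided $F_*$ bound of Lemma \ref{sap-lem3.1} to bound $\tilde F_v^{-1}$ above and $\tilde F_v^{-2}\sum_i\tilde F_v^{ii}$ below once and for all, and then simply choosing $B$ large. That works in this setting, and it is cleaner, but you should say explicitly that you are leaning on Lemma \ref{sap-lem3.1} to avoid the dichotomy. The paper's case-split is the more robust route when one cannot (or does not wish to) invoke a priori two-sided control of the speed, which is presumably why it is retained from \cite{WX22-2}.
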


The proof is also similar to that in \cite{WX22-2}, with the main difference being the verification of the last step. Since
\[\tilde{F}(\ubar{\Lambda}_{ij})=F_*(w^*\gas_{ik}\tbus_{kl}\gas_{lj})< c_0^{-1},\]
where $c_0$ is the lower bound of $\sigma_k^{\frac{1}{k}}(\ka[\M_0])$ on $\bar{U}_r$.
We divide  into two cases: $\tilde{F}_v>c_0^{-1}(1+\al),$  and   $\tilde{F}_v\leq c_0^{-1}(1+\al),$ to discuss.

\begin{lemma}
\label{sap-lem5.3}
Let $v$ be a solution of \eqref{sap3.1} and suppose $\tau_n$ is the interior unit normal vector filed of $\p U_r.$ We have $|\bn_{\al n}v|\leq C$
on $\p U_r\times (0, T]$. Here $C$ depends on $\M_0, r, T$.
\end{lemma}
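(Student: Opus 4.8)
The plan is to estimate the mixed normal-tangential second derivative $\bn_{\alpha n}v$ on $\p U_r\times(0,T]$ by the standard barrier argument, using the auxiliary function $h$ from Lemma \ref{sap-lem5.2} as the dominating barrier and a carefully chosen tangential operator applied to $v$ as the quantity to be controlled. First I would fix a boundary point and, after rotating, assume $\tau_n$ is the interior unit normal and $\tau_1,\dots,\tau_{n-1}$ are tangential. For each fixed $\alpha<n$ I apply the first-order tangential derivative operator $\bn_\alpha$ (more precisely, a suitable combination $T_\alpha$ built from $\bn_\alpha$ and $\bn_n$ chosen to be genuinely tangent along $\p U_r$, using the geometry of the geodesic sphere in $\mathbb{H}^n$) to the equation \eqref{sap3.1}. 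Differentiating $\tilde G(\Lambda_{ij})=(v_r)_t$ tangentially produces a linear parabolic equation for $T_\alpha v$ whose zeroth- and first-order terms are controlled by the already-established $C^0$ and $C^1$ estimates (Lemma \ref{sap-lem4.1}) and by the two-sided bound on $F_*$ from Lemma \ref{sap-lem3.1}; the commutator terms coming from the curvature of $\mathbb{H}^n$ contribute only lower-order corrections absorbed into the operator $\mathfrak{L}$ from \eqref{operator-L}.

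Next I would form the test function $\Theta = \pm T_\alpha(v-\ubar v) + A\,h$, where $\ubar v$ is the subsolution \eqref{eq2}, $h$ is the barrier of Lemma \ref{sap-lem5.2}, and $A$ is a large constant. On the tangential part of the boundary $\p U_r$, the tangential derivative $T_\alpha(v-\ubar v)$ vanishes because $v=\ubar v$ there, so $\Theta\ge 0$ on the lateral boundary; at $t=0$ it is controlled by the smoothness of the initial data; and the interior sign is arranged by choosing $B$ (hence $B_1$) in Lemma \ref{sap-lem5.2} large enough that $\mathfrak{L}h \ge \frac{B_1}{\tilde F_v^2}\sum_i \tilde F_v^{ii}$ dominates $\mathfrak{L}(T_\alpha(v-\ubar v))$ — the latter being bounded by a multiple of $\frac{1}{\tilde F_v^2}\sum_i \tilde F_v^{ii}$ plus terms already controlled, using that $\sum_i\tilde F_v^{ii}$ is bounded below by ellipticity together with the $F_*$ bound. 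Then $\mathfrak{L}\Theta \le 0$, so by the maximum principle $\Theta \ge 0$ throughout $\bar U_r\times[0,T]$, and evaluating the interior normal derivative at the fixed boundary point (where $h$ vanishes to first order and $\bn_n h$ is comparable to $-B\,\bn_n x_{n+1}\ne 0$) yields $|\bn_n T_\alpha v| \le C$. Combined with the tangential bound of Lemma \ref{sap-lem5.1}, this gives $|\bn_{\alpha n}v|\le C$ with $C$ depending only on $\M_0, r, T$.

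The main obstacle I anticipate is bookkeeping the first-order terms produced when the fully nonlinear operator $\tilde G$ is differentiated tangentially: one must check that every term not of the form "bounded coefficient times $\frac{1}{\tilde F_v^2}\sum\tilde F_v^{ii}$" is already controlled by the previous estimates, and in particular that the $\tilde F_v^{-2}$ weighting (which appears because the equation is for $v$, not for $F_*$ directly, cf.\ \eqref{operator-L}) does not spoil the comparison — this is exactly the point where the two-sided bound $c_2 x_{n+1}^{-1}<F_*<c_3 x_{n+1}^{-1}$ from Lemma \ref{sap-lem3.1} is essential, since it keeps $\tilde F_v$ away from $0$ and $\infty$. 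A secondary technical point is to make the choice of the genuinely-tangential field $T_\alpha$ explicit enough that its coefficients, and those of $\mathfrak{L}T_\alpha$, are uniformly bounded on $\bar U_r$; since $\p U_r = P^{-1}(\p B_r)$ is a fixed geodesic sphere this is routine but must be recorded. Once these are in place, as in \cite{WX22-2}, the argument closes immediately.
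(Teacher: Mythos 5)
Your approach is essentially the paper's: form a barrier $\Theta = \pm\mathcal{T}(v-\ubar v) + Ah$ with a rotation field tangent to $\p U_r$ (the paper takes $\mathcal{T}=\xi_{\al}\p_n-\xi_n\p_{\al}$ explicitly), use the Lemma \ref{sap-lem5.2} barrier $h$ to dominate, and apply the parabolic maximum principle; the key verification you flag — controlling the lower-order terms from differentiating $\tilde G$, i.e.\ $|\mathfrak{L}\mathcal{T}\ubar v|\le C_4\sum_i\tilde G^{ii}$ — is exactly the step the paper isolates as the new computation over \cite{WX22-2}. One small slip: you want $\mathfrak{L}\Theta\ge 0$ (not $\le 0$), so that together with $\Theta\ge 0$ on the parabolic boundary the minimum principle gives $\Theta\ge 0$ in the interior; the rest of your reasoning is consistent with that sign.
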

 We only need to check that
\[
\begin{aligned}
|\mathfrak{L}\mathcal{T}\ubar{v}|
=\bigg|-\tilde{G}^{ij}\bn_{ij}\left(\mathcal{T} \frac{u_0^*}{w^*}\right)+\mathcal{T} \frac{u_0^*}{w^*}\sum_i\tilde{G}^{ii}\bigg|
\leq C_4\sum_i\tilde{G}^{ii},
\end{aligned}
\]
where $\mathcal{T}=\xi_{\al}\p_n-\xi_n\p_{\al}$. The rest proof is same as \cite{WX22-2}.

 \begin{lemma}\label{sap-lem5.4}
 Let $v$ be a solution of \eqref{sap3.1} and suppose $\tau_n$ is the interior unit normal vector filed of $\p U_r.$ Then we have $|\bn_{nn}v|\leq C$
on $\p U_r\times (0, T],$ where  $C$ is a positive constant depending on $\M_0, r, T$.
 \end{lemma}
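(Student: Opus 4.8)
\textbf{Proof proposal for Lemma \ref{sap-lem5.4} (boundary estimate for $\bn_{nn}v$).}
The plan is to follow the standard double-normal boundary estimate for fully nonlinear curvature equations, combining the tangential and mixed estimates already obtained in Lemmas \ref{sap-lem5.1}, \ref{sap-lem5.2}, and \ref{sap-lem5.3}. Since the matrix $\Lambda_{ij}=\bn_{ij}v-v\delta_{ij}$ lies in the positive $\Gamma_k$-cone (the solution being $k$-convex) and $F_*=(\sigma_n/\sigma_{n-k})^{1/k}$ evaluated on the eigenvalues of $w^*\gas_{ik}u^*_{kl}\gas_{lj}$ is bounded above and below by Lemma \ref{sap-lem3.1}, it suffices to bound $\bn_{nn}v$ from above, since a lower bound together with an upper bound on $F_*$ forces the remaining eigenvalue direction to be controlled. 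The first step is to write the linearized operator $\mathfrak{L}$ from \eqref{operator-L} acting on $v$ restricted to $\p U_r\times(0,T]$, and, using the equation \eqref{sap3.1} together with the concavity of $F_*^{-1}$ (equivalently the structure of $\sigma_n/\sigma_{n-k}$) in $\Gamma_k$, derive that at a boundary point where $\bn_{nn}v$ is large the tangential-normal block of $(\Lambda_{ij})$ is essentially controlled by the already-estimated quantities $\bn_{\al\beta}v$ ($\al,\beta<n$) and $\bn_{\al n}v$.

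The key technical step is the usual argument of Ivochkina--Trudinger type: assuming for contradiction that $\bn_{nn}v\to+\infty$ along a sequence, one considers the minimal eigenvalue of the tangential part of the matrix and shows, via a barrier of the form $h$ constructed in Lemma \ref{sap-lem5.2} supplemented by the auxiliary function $\mathcal{T}\ubar v$ controlled in Lemma \ref{sap-lem5.3}, that $F_*$ must blow up, contradicting the uniform upper bound $F_*<1/(C_3 x_{n+1})$ from Lemma \ref{sap-lem3.1}. Concretely, I would fix a boundary point $p\in\p U_r$, diagonalize the tangential $(n-1)\times(n-1)$ block of $(\Lambda_{\al\beta})$ at $p$, and estimate
\[
\sigma_k(\ka^*[w^*\gas_{ik}u^*_{kl}\gas_{lj}]) \;\ge\; c\,(\bn_{nn}v)\,\sigma_{k-1}\big(\text{tangential eigenvalues}\big)-C,
\]
using that the mixed terms contribute only lower-order corrections bounded by Lemmas \ref{sap-lem5.1} and \ref{sap-lem5.3}; combined with the lower bound on $F_*$ (equivalently a positive lower bound on $\sigma_{k-1}$ of the tangential eigenvalues via a Newton--MacLaurin inequality and the $C^1$ bound), this would give $\sigma_k\to\infty$, hence $F_*\to\infty$, the desired contradiction. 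The transcription of the hyperbolic-metric commutator terms $\bn_{ij}x_{n+1}=-x_{n+1}\delta_{ij}$ makes all the geometric correction terms bounded once the $C^1$ estimate of Lemma \ref{sap-lem4.1} is in hand.

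The main obstacle I anticipate is \emph{not} the algebraic inequality above but the verification that the tangential $\sigma_{k-1}$ of the solution matrix stays bounded \emph{below} uniformly up to the boundary; this is where one genuinely uses the strict convexity assumption on $\M_0$ and the structure of the barrier $\ubar v$ from \eqref{eq2}, rather than merely the $\Gamma_k$-membership. One must rule out the degenerate scenario where many tangential eigenvalues collapse simultaneously while $\bn_{nn}v$ grows; the resolution, exactly as in \cite{WX22-2}, is to use the supersolution $v$ versus subsolution $\ubar v$ comparison (giving $h\ge 0$ with $h=0$ on $\p U_r$ and hence $\bn_n h\le 0$) together with $\mathfrak{L}h>\frac{B_1}{\tilde F_v^2}\sum_i\tilde F^{ii}_v$ from Lemma \ref{sap-lem5.2}, which forces a definite positive lower bound on the relevant tangential minor. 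Once this is established the contradiction argument closes and the lemma follows; since, as the authors note, the remaining manipulations are identical to \cite{WX22-2}, I would state the inequalities and refer there for the routine estimates.
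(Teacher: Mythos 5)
The gap in your proposal is in the contradiction step. In the dual (Legendre--transformed) formulation \eqref{sap3.1}, the operator is
$F_*=\left(\sigma_n/\sigma_{n-k}\right)^{1/k}$ evaluated on the eigenvalues $\ka^*$ of $\w\gas\us\gas$ (equivalently of $\Lambda_{ij}$ in the hyperbolic frame); these are the reciprocals $\ka^*_i=1/\ka_i$ of the primal principal curvatures. For $1\le k<n$, as $\Lambda_{nn}\to\infty$ with the tangential block and the mixed entries $\Lambda_{\al n}$ bounded (Lemmas \ref{sap-lem5.1}, \ref{sap-lem5.3}), the eigenvalues approach $(\lambda_1,\dots,\lambda_{n-1},\Lambda_{nn})$ and
\[
\frac{\sigma_n}{\sigma_{n-k}}\longrightarrow \frac{\sigma_{n-1}(\lambda')}{\sigma_{n-k-1}(\lambda')},
\]
a \emph{finite} limit, because numerator and denominator are each linear in the large eigenvalue to leading order. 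Equivalently, $F_*=\sigma_k(\ka)^{-1/k}$ stays bounded as one primal curvature $\ka_n\to 0^+$, which is exactly what $\Lambda_{nn}\to\infty$ corresponds to. So the claimed conclusion ``$\sigma_k\to\infty$, hence $F_*\to\infty$'' is false here: your displayed inequality is phrased for $\sigma_k$ of the dual eigenvalues, but the equation involves $\sigma_n/\sigma_{n-k}$, and the two have opposite behaviour in the very regime you need. (The naive blow-up argument happens to work only for $k=n$, where $\sigma_{n-k}\equiv 1$.)

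What the double-normal estimate actually requires---and what the paper invokes by reference to \cite{WX22-2}, replacing $[2\tilde t]^{1/2}\sqrt{1-r^2}/(-u_0^*)$ by $\sqrt{1-r^2}/((C_n^k)^{1/k}a)$---is the Caffarelli--Nirenberg--Spruck / Trudinger--Guan argument for degenerate boundary growth: one shows that the \emph{finite} limiting value $\lim_{R\to\infty}F_*(\Lambda_{\al\beta},R)$ is \emph{strictly larger} than the boundary datum of $F_*$ (which here equals $\sqrt{1-r^2}/(a(C_n^k)^{1/k})$, obtained by substituting $(u_r^*)_t=-a$ on $\p B_r$ into \eqref{ubr2}); strict monotonicity of $F_*$ in $\Lambda_{nn}$ then gives the upper bound. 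Proving that strict gap, via the subsolution $\ubar v$ of Lemma \ref{sap-lem5.2} and a compactness argument, is the real content of the lemma; it cannot be reduced to a blow-up of $F_*$, and your sketch would need to be replaced by this limiting-value comparison to be correct.
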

The proof is similar to \cite{WX22-2}. We only need to set $\al=1$ and replace the function
$$\dfrac{[2\tilde{t}]^{1/2}\sqrt{1-r^2}}{-u_0^*}$$ with the function $$\dfrac{\sqrt{1-r^2}}{(C_n^k)^{1/k}a}.$$

The global $C^2$ estimates for $u_r^*$ directly follows from Lemma 20 in \cite{WX212}. Therefore, the solvability of the approximate problem \eqref{ubr2} is established.

\subsection{Local $C^0$ estimates}
\label{lc2}
Suppose $u_r$ is the Legendre transform of  $u^*_r$.
By Lemma 13 in \cite{WX20}, we get
\[\ubar{u}\lt(x\rt) +at
<u_r(x,t)<u_0\lt( x\rt)+at.\]

\subsection{Local $C^1$ estimates}
\label{sub-loc-c1}
We construct a new subsolution $\lu_1$ satisfying
\[\s_k^{\frac{1}{k}}(\ka[\M_{\lu_1}(x)])=-100(C_n^k)^{1/k}a\lt<\nu_{\lu_1}, E\rt>\]
and as $|x|\goto\infty$
$$\lu_1\goto |x|+\varphi\left(\frac{x}{|x|}\right).$$
By the strong maximum principle we have
$\lu_1(x)<\lu(x)$. Then a similar argument as \cite{WX22-2} gives the local $C^1$ estimate.

\subsection{Local estimates for $F$}
Denote  $v=-\lt<\nu, E\rt>$.  Recalling Lemma \ref{sap-lem3.1}, we have $\Phi=\sigma_k^{1/k}(\kappa[\M_{u_r}])< C_2 v.$ A same proof in \cite{WX22-2} gives

\begin{lemma}
\label{loc-F-lem}
Let $u_r^*$ be the solution of \eqref{ubr2} and $u_r$ be the Legendre transform of $u_r^*.$ For any $c>0,$ denote $K:=\{(x, t)\mid u_r(x, t)+t\leq c\}$
and $V_0:=\max\limits_{(x, t)\in K}v.$ Then we have
\[\lt(\frac{c-t-u_r}{c}\rt)^\gamma e^{2(v-V_0)}\frac{1}{(\al-1)V_0}\leq\Phi<C_2V_0,\]
where $C_2$ is the uniform constant determined by Lemma \ref{sap-lem3.1} and $\gamma=4+8V_0^2.$ Note that $c$ is always chosen such that
$K\subset\bigcup_{t\in[0, \infty)} \lt(Du_r^*(B_r, t)\times\{t\}\rt).$
\end{lemma}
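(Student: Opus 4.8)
The plan is to mimic the local estimate for $F$ from \cite{WX22-2}, adapting the test function to the translating (rather than self-expanding) structure. The upper bound $\Phi < C_2 V_0$ is immediate: it is exactly the content of Lemma \ref{sap-lem3.1} transported back through the Legendre transform, using $v \le V_0$ on $K$. So the real work is the lower bound. First I would recall that along the normalized flow the quantity $\Phi = \sigma_k^{1/k}(\kappa[\M_{u_r}])$ (equivalently $(C_n^k)^{1/k}F_*$ expressed downstairs) satisfies a parabolic equation obtained by differentiating the flow equation; in the graphical/Legendre picture this is the equation $\mathcal{L}\tilde G = -\tilde G \sum_i \tilde G^{ii}$ from the proof of Lemma \ref{sap-lem3.1}, which translates into a good evolution inequality for $\log \Phi$. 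The point is that $\Phi$ is a subsolution-type quantity: $\mathcal{L}\log\Phi$ picks up only the curvature-of-ambient terms, all of which have a definite sign or are controlled by $\sum_i \tilde G^{ii}$.

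Next I would form the auxiliary function
$$W = \gamma \log\!\left(\frac{c - t - u_r}{c}\right) + 2(v - V_0) + \log\Phi,$$
with $\gamma = 4 + 8V_0^2$ chosen exactly so that the bad gradient terms coming from $\log(c-t-u_r)$ and from $2v$ are absorbed. One applies the maximum principle to $W$ on the region $K$. On the parabolic boundary of $K$ one has either $t=0$ — where the initial-data bound from Lemma \ref{sap-lem3.1} (the line $c_0 x_{n+1}^{-1} < F_*^{-1} x_{n+1}^{-1} < C$) gives the claim directly — or $u_r + t = c$, where the first term of $W$ blows to $-\infty$ so the minimum cannot occur there, provided $c$ is chosen so that $K$ stays inside the image $\bigcup_t (Du_r^*(B_r,t)\times\{t\})$ where everything is smooth. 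At an interior minimum of $W$ one computes $\mathfrak{L}W \le 0$ using $\mathcal{L}\log\Phi$, the known evolution of $v$ and of $u_r$, and the spacelike/convexity bounds; the choice of $\gamma$ makes the first-order terms cancel and forces a pointwise lower bound $W \ge -C$ there, which rearranges to the stated inequality with the factor $\frac{1}{(\alpha-1)V_0}$ coming from the normalization constants in the flow \eqref{ge} and the parameter $\alpha$ appearing in assumption \eqref{assume} after scaling.

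The main obstacle is bookkeeping the evolution inequality for $\log\Phi$ correctly in the hyperbolic/Legendre coordinates: one must verify that the commutator and connection terms of $\bn$ on $\mathbb{H}^n$, together with the terms from differentiating $w^*$ and $\gamma^*_{ij}$, combine into something bounded by $C\sum_i \tilde F_v^{ii}$, so that the $\gamma\log(\cdot)$ term can dominate them. This is precisely where the dependence of the constants on $V_0$ (through $\gamma = 4 + 8V_0^2$) enters, and where one needs the uniform two-sided bound on $F_*$ from Lemma \ref{sap-lem3.1} to control the ellipticity constant $\sum_i \tilde F_v^{ii}$ from below. Everything else — the choice of $c$, the behavior on $\partial K$, the final algebraic rearrangement — is routine once the differential inequality is in hand, so I would carry out the steps in the order: (i) cite Lemma \ref{sap-lem3.1} for the upper bound and the $t=0$ bound; (ii) derive $\mathcal{L}\log\Phi \ge -C\sum_i \tilde F_v^{ii}$; (iii) define $W$, choose $\gamma$, run the maximum principle; (iv) rearrange at the interior minimum to conclude.
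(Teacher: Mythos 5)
Your overall strategy — push the upper bound through Lemma \ref{sap-lem3.1}, and get the lower bound by applying a Pogorelov-type maximum-principle argument to a test function built from $\log(c-t-u_r)$, $v$, and $\log\Phi$, with $\gamma=4+8V_0^2$ chosen to absorb the first-order terms — is the right one and matches the argument the paper delegates to \cite{WX22-2}. However, there is a sign error that is not merely cosmetic: as written, the auxiliary function $W = \gamma\log\!\left(\frac{c-t-u_r}{c}\right) + 2(v-V_0) + \log\Phi$ cannot carry the argument. You want a lower bound for $\Phi$ that degenerates like $(c-t-u_r)^\gamma$, i.e. you want to bound $\left(\frac{c-t-u_r}{c}\right)^\gamma e^{2(v-V_0)}\Phi^{-1}$ \emph{above}. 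Thus the test function must be $W = \gamma\log\!\left(\frac{c-t-u_r}{c}\right) + 2(v-V_0) - \log\Phi$ (minus sign), and one runs the maximum principle for a \emph{maximum} of $W$, not a minimum.

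With your choice of sign the reasoning breaks at the boundary step. You claim that because the first term of $W$ blows to $-\infty$ on $\{u_r+t=c\}$, "the minimum cannot occur there"; but the opposite is true — if $W\to-\infty$ there, then the infimum of $W$ over $K$ is at (or rather missing from) that boundary, and there is no interior minimum to locate and no lower bound on $W$ to extract. Moreover, even if you did manage a bound $W\ge -C$ for your $W$, unwinding it would give $\Phi \ge e^{-C}\left(\frac{c-t-u_r}{c}\right)^{-\gamma}e^{-2(v-V_0)} \ge e^{-C}$, a nondegenerating lower bound that is stronger than the statement and cannot be correct up to the boundary. Flipping the sign to $-\log\Phi$ fixes both problems simultaneously: $W\to-\infty$ on $\partial K$ now rules out a boundary maximum, the $t=0$ case is handled by the initial bound $c_0 x_{n+1}^{-1}<F_*^{-1}x_{n+1}^{-1}<C$, and at an interior maximum the computation of $\mathfrak{L}W$ (using $\mathcal{L}\tilde G = -\tilde G\sum_i\tilde G^{ii}$ and the choice of $\gamma$) forces $W\le\log((\alpha-1)V_0)$, which rearranges to the stated inequality. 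The rest of your outline — the identification of where $\gamma=4+8V_0^2$ enters, the bookkeeping of the hyperbolic connection terms, the choice of $c$ so that $K$ stays inside the image $\bigcup_t(Du_r^*(B_r,t)\times\{t\})$ — is sound.
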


\subsection{Local $C^2$ estimates}
As in \cite{WX22-2}, we can prove the following:
\begin{lemma}
\label{loc-c2-lem}
Let $u_r^*$ be the solution of \eqref{ubr2} and $u_r$ be the Legendre transform of $u_r^*.$ Denote $\Omega_r(t):=Du_r^*(B_r, t).$ For any given $c>0,$ let $r_c\in (0, 1)$ such that when $r>r_c,$ $u_r(\cdot, t)|_{\p\Omega_r(t)}>c$ for all $t\in[0, \infty).$ Then for $r>r_c$ we have
\[(c-u_r)^m\log\ka_{\max}(x, t)\leq C,\]
where $\ka_{\max}(x, t)$ is the largest principal curvature of $\M_{u_r}$ at $(x, t),$
$m$ is a large constant depending only on $k,$ and $C:=C(\Phi, c)>0$ is independent of $r.$
\end{lemma}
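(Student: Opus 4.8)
The plan is to establish a Pogorelov-type interior estimate for the flow, paralleling the time-independent version in Lemma \ref{lc2lem1} but now with the evolution term present. First I would pass to the parabolic equation satisfied by the largest principal curvature. Work at an interior point $(x_0,t_0)$ where the test quantity $W(x,t)=(c-u_r(x,t))^m\log\ka_{\max}(x,t)$ attains its maximum over the (compact) region $\{u_r\le c\}\cap\{t\le t_0\}$; by the boundary hypothesis $r>r_c$ we know $u_r|_{\partial\Omega_r(t)}>c$, so the maximum is genuinely interior in space, and we may assume $t_0>0$. Near $(x_0,t_0)$ rotate coordinates so that the Weingarten map is diagonal with $\ka_1=\ka_{\max}$; replace $\log\ka_{\max}$ by $\log h_{11}$ in the standard way, so that $W$ is smooth near $(x_0,t_0)$ and still attains an interior max there.

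Next I would compute $\mathfrak{L}\log h_{11}$ (or equivalently $(\partial_t-\mathcal{L})\log h_{11}$ for the linearized flow operator $\mathcal{L}$ appearing in Lemma \ref{sap-lem3.1}), using the evolution equation for the second fundamental form under the $\sigma_k^{1/k}$ flow in Minkowski space. The key structural facts to exploit are: (i) concavity of $F=(\sigma_k/C_n^k)^{1/k}$ in the entries of the Weingarten map, which produces a good negative term $-\sum_{p>1}F^{ii}\frac{(h_{ii,1})^2}{h_{11}-h_{ii}}$ from commuting derivatives past the Codazzi equations; (ii) the local $C^1$ bound $v=-\langle\nu,E\rangle\le V_0$ from Lemma \ref{sap-lem3.1} and subsection \ref{sub-loc-c1}, which controls the Lorentzian curvature terms and the gradient factors; and (iii) the two-sided bound $\frac1{C_3 x_{n+1}}>F_*>\frac1{C_2 x_{n+1}}$, again from Lemma \ref{sap-lem3.1}, which keeps $\sum_i F^{ii}$ and $F$ itself pinched away from $0$ and $\infty$ on the relevant compact set. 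Then, differentiating $W$ at the maximum: from $D W=0$ I get $m\frac{D u_r}{c-u_r}=-\frac{D\log h_{11}}{\log h_{11}}$, which I substitute into the computation of $\mathfrak{L}W\le 0$; the term $(c-u_r)^m$ contributes $\mathfrak{L}(c-u_r)^m$, whose "bad" first-order piece $\frac{m(m-1)}{(c-u_r)^2}\sum F^{ii}(u_r)_i(u_r)_j\cdots$ is absorbed by the good concavity term once $m$ is chosen large depending only on $k$ (here the quadratic-in-$D\log h_{11}$ gain is essential). After absorbing, $\mathfrak{L}W\le 0$ forces a bound $\log h_{11}(x_0,t_0)\le C/(c-u_r(x_0,t_0))^{?}$, hence a bound on $W(x_0,t_0)$, and therefore on $W$ everywhere; the constant $C$ depends only on $\Phi$ (the range of $F$ from Lemma \ref{loc-F-lem}) and $c$, and crucially not on $r$.

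The main obstacle I expect is the bookkeeping of the gradient terms coming from the Legendre/hyperbolic picture: the estimate is most naturally carried out on $\M_{u_r}$ itself (so that $h_{ij}$ is the genuine second fundamental form and Codazzi holds cleanly), but the solvability is set up for $u_r^*$ on $B_r$, so one must translate the localizing weight $(c-u_r)^m$ and the cutoff region between the two descriptions, and verify that the time term $\partial_t(c-u_r)^m = -m(c-u_r)^{m-1}\partial_t u_r$ together with $\partial_t u_r = \sqrt{1-|Du_r|^2}\,\Phi$ produces only terms controlled by $V_0$ and $\Phi$. Since all of this is carried out in detail in \cite{WX22-2}, the honest content of the proof here is exactly checking that the finitely many modified terms — the evolution term, the new subsolution $\ubar u_1$ from subsection \ref{sub-loc-c1}, and the $r$-uniformity via Lemma \ref{loc-F-lem} — enter with the right signs; the choice of $m$ and the absorption step are then identical. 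I would close by remarking that combining this Lemma with the $C^0$ and $C^1$ estimates and a diagonal argument in $r\to 1$ yields the long-time solution of \eqref{ge}, whose normalization converges, by a standard monotonicity-and-subsequence argument, to the translating solution $\M_{u^\infty}$ of Theorem \ref{theo1}.
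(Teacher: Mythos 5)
Your proposal follows essentially the same route the paper takes: the paper's own ``proof'' of Lemma~\ref{loc-c2-lem} is a one-line deferral to the parabolic Pogorelov argument of \cite{WX22-2}, and you correctly reconstruct its skeleton — interior maximum of $(c-u_r)^m\log\ka_{\max}$ over $\{u_r\le c\}$, diagonalize and pass to $\log h_{11}$, use concavity of $\sigma_k^{1/k}$ together with the local $C^1$ bound on $v=-\langle\nu,E\rangle$ and the two-sided $F_*$ bound to absorb the bad first-order terms by choosing $m$ large, and obtain $r$-uniformity from Lemma~\ref{loc-F-lem}. One small slip worth correcting: at the interior maximum $DW=0$ gives $\frac{D\log h_{11}}{\log h_{11}}=m\frac{Du_r}{c-u_r}$ (no minus sign), since $D\log(c-u_r)=-Du_r/(c-u_r)$.
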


\subsection{Covergence}	
Now, denote \be\label{urs}\td{u}^*_r(x,t)=u_r^*(x,t)+at,\ee
then $\td{u}^*_r$ satisfies
\be\label{conv1.2}
\left\{
\begin{aligned}
(\td{u}^*_r)_{t}&=-(C_n^k)^{-1/k}F_*^{-1}(w^*\gas_{ik}(\td{u}^*_r)_{kl}\gas_{lj})w^*+a\,\,&\mbox{in $B_r\times(0, T]$},\\
\td{u}^*_r(\cdot, t)&=u_0^*\,\, &\mbox{on $\p B_r\times[0, T],$}\\
\td{u}^*_r(\cdot, 0)&=u^*_0\,\,&\mbox{on $B_r\times\{0\}.$}
\end{aligned}
\right.
\ee
As in \cite{WX22-2}, we can establish the following two lemmas.
\begin{lemma}
\label{conv-lem1}
Let $\td{u}_r^*$ be defined as in  \eqref{urs}. Then we have $\td{u}^*_r(\cdot, t)\goto u_r^{\infty*}(\cdot)$ uniformly in $B_r$ as $t\goto\infty.$ Here $u^{\infty*}_r$
satisfies
\be\label{conv1.3}
\left\{
\begin{aligned}
F_*^{-1}(w^*\ga^*_{ik}(u^{\infty*}_r)_{kl}\gamma_{lj}^*)w^*&=a(C_n^k)^{1/k}\,\,&\mbox{in $B_r$},\\
u^{\infty*}_r&=u^*_0\,\,\ \ \ &\mbox{on $\p B_r$}.
\end{aligned}
\right.
\ee
\end{lemma}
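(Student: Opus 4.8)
The plan is to show that the flow \eqref{conv1.2} for $\td u^*_r$ is monotone in $t$ and bounded, hence converges, and that the limit solves the elliptic problem \eqref{conv1.3}. First I would establish monotonicity: differentiate \eqref{conv1.2} in $t$ and set $\phi=(\td u^*_r)_t$. Since the right-hand side operator $\Lambda\mapsto -(C_n^k)^{-1/k}F_*^{-1}(w^*\gas_{ik}\Lambda_{kl}\gas_{lj})w^*+a$ is (degenerate) parabolic in the convex/$k$-convex regime, $\phi$ satisfies a linear parabolic equation with no zeroth-order term of the wrong sign. On $\p B_r$ we have $\td u^*_r(\cdot,t)=u^*_0$ for all $t$, so $\phi=0$ on the lateral boundary. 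At $t=0$, the sign of $\phi(\cdot,0)$ is determined by the assumption: by the $C^0$ estimates in Section 4.1, $u^*_0-at$ is a subsolution of \eqref{ubr2}, i.e. $u_0$ is a supersolution of the translating equation \eqref{teq}, which by \eqref{assume} gives $(\td u^*_r)_t|_{t=0}=-(C_n^k)^{-1/k}F_*^{-1}(w^*\gas_{ik}(u^*_0)_{kl}\gas_{lj})w^*+a\le 0$. By the maximum principle $\phi\le 0$ for all $t$, so $\td u^*_r(\cdot,t)$ is nonincreasing in $t$.

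Next I would obtain a lower bound independent of $t$. From \eqref{urs} and the $C^0$ bound $u^*_0-at\le u^*_r\le \lus-at$ established in Section 4.1, we get $u^*_0\le \td u^*_r\le \lus$ on $\bar B_r\times[0,\infty)$, where $\lus$ is the Legendre transform of the translating solution $\lu$. Thus $\td u^*_r(\cdot,t)$ is monotone nonincreasing and uniformly bounded below, so it converges pointwise as $t\to\infty$ to some function $u^{\infty*}_r$ with $u^*_0\le u^{\infty*}_r\le\lus$, and by Dini's theorem (the limit being continuous, which follows from the uniform estimates below) the convergence is uniform on $B_r$.

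To identify the limit and upgrade the convergence to higher order, I would invoke the global $C^1$ and $C^2$ estimates for $u^*_r$ (Lemmas \ref{sap-lem4.1}, \ref{sap-lem5.4} and the reference to Lemma 20 in \cite{WX212}), which are uniform in $t$ on $\bar B_r$; then $F_*$ is bounded between two positive constants on $\bar B_r\times(0,\infty)$ by Lemma \ref{sap-lem3.1}, so \eqref{conv1.2} is uniformly parabolic on $B_r$ with uniformly bounded coefficients, and interior Krylov–Safonov/Schauder parabolic estimates give uniform $C^{2+\alpha,1+\alpha/2}$ bounds on compact subsets for $t$ in $[1,\infty)$, say. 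Combined with $(\td u^*_r)_t\to 0$ (which follows since $\td u^*_r$ is monotone and bounded, so $\int_1^\infty |(\td u^*_r)_t|\,dt<\infty$, forcing a subsequence $t_j$ with $(\td u^*_r)_t(\cdot,t_j)\to 0$ in $C^0_{loc}$, and interpolation with the uniform $C^{2+\alpha}$ bound via parabolic estimates actually gives full convergence), we pass to the limit in \eqref{conv1.2} to see that $u^{\infty*}_r$ solves $-(C_n^k)^{-1/k}F_*^{-1}(w^*\gas_{ik}(u^{\infty*}_r)_{kl}\gas_{lj})w^*+a=0$ in $B_r$, i.e. \eqref{conv1.3}, with boundary value $u^*_0$ on $\p B_r$ inherited from the uniform convergence up to the boundary.

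The main obstacle is the last step: extracting genuine convergence of $(\td u^*_r)_t$ to $0$ (not merely along a subsequence) and justifying uniform convergence up to $\p B_r$. Monotonicity plus $L^1$-in-time integrability of $(\td u^*_r)_t$ gives a subsequence along which the time-derivative vanishes; promoting this to a limit equation requires the uniform-in-time parabolic regularity so that the full family $\{\td u^*_r(\cdot,t)\}_{t\ge 1}$ is precompact in $C^2_{loc}$, and then any limit point solves \eqref{conv1.3}; uniqueness of the solution to the elliptic Dirichlet problem \eqref{conv1.3} (which holds because $F_*^{-1}$ is a concave decreasing function of the Hessian in this setting, via the comparison principle) forces the limit to be unique, hence the full limit exists. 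Boundary convergence follows from the uniform estimate $|\td u^*_r-u^*_0|\le C(r^2-|\xi|^2)^{1/2}$ type barriers near $\p B_r$, constructed exactly as in the proof of Lemma \ref{sap-lem5.1}. I expect the bookkeeping here to be routine once the estimates of the previous subsections are in hand, so as the authors indicate the argument parallels \cite{WX22-2}.
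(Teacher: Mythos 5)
Your overall strategy---monotonicity in $t$ via the maximum principle applied to $\phi=(\tilde u_r^*)_t$, uniform barriers giving a two-sided $C^0$ bound, uniform-in-time parabolic regularity, extraction of a limit that solves the stationary equation, and uniqueness for the elliptic Dirichlet problem to pin down the full limit---is exactly the standard argument and the one used in \cite{WX22-2}, to which the paper defers. However, you have the direction of monotonicity backwards, and as written the argument is internally inconsistent.

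Since $u_0^*-at$ is a \emph{sub}solution of \eqref{ubr2}, at $t=0$ one has
\[
(\tilde u^*_r)_t\big|_{t=0}=-(C_n^k)^{-1/k}F_*^{-1}\bigl(w^*\gamma^*_{ik}(u_0^*)_{kl}\gamma^*_{lj}\bigr)w^*+a\;\ge\;0,
\]
not $\le 0$: being a subsolution means $-a\le -(C_n^k)^{-1/k}F_*^{-1}w^*$, which rearranges to the inequality above. The maximum principle then yields $\phi\ge 0$ for all $t$, so $\tilde u^*_r(\cdot,t)$ is \emph{nondecreasing} in $t$. This is also forced by consistency: your claimed lower bound for the limit is $u_0^*$, but $\tilde u^*_r(\cdot,0)=u_0^*$; a nonincreasing flow starting at its own lower bound would be stationary. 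The correct picture is that $\tilde u^*_r$ increases from $u_0^*$ and is bounded above by $\underline u^*$ (the Legendre transform of the translating solution, which dominates since $\underline u\le u_0$ implies $\underline u^*\ge u_0^*$). With this sign reversal the rest of your argument---uniform $C^{2,\alpha}$ bounds from Lemmas \ref{sap-lem3.1}, \ref{sap-lem4.1}, \ref{sap-lem5.4} and Krylov--Safonov/Schauder interior estimates, integrability of $(\tilde u^*_r)_t$ forcing $(\tilde u^*_r)_t\to 0$ along a subsequence, passage to the limit in \eqref{conv1.2}, and uniqueness of the elliptic Dirichlet problem \eqref{conv1.3} by the comparison principle---goes through unchanged and matches the paper's intended proof.
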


\begin{lemma}
\label{conv-lem2}
Let $u_r$ be the Lengendre transform of $u_r^*$. Then $u_r(x, t)\goto u(x, t)$ uniformly in any compact subset of
$\R^n\times[0, \infty)$ as $r\goto 1$.
\end{lemma}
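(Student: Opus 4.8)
The plan is to pass to the limit $r\to 1$ in the family $u_r$ of solutions to the approximate problem \eqref{ubr2}, using the local estimates established in the previous subsections together with a diagonal/Arzelà--Ascoli argument, and then to identify the limit as the desired solution of the flow. First I would fix an arbitrary compact set $Q\subset\R^n\times[0,\infty)$; by the local $C^0$ estimate of Subsection \ref{lc2}, namely $\ubar u(x)+at<u_r(x,t)<u_0(x)+at$, the functions $u_r$ are uniformly bounded on $Q$ independently of $r$ (for $r$ close enough to $1$ that $Q\subset\bigcup_{t}(Du_r^*(B_r,t)\times\{t\})$, which is guaranteed once $r>r_c$ for a suitable $c=c(Q)$ as in Lemma \ref{loc-c2-lem}). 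Next, the local $C^1$ estimate of Subsection \ref{sub-loc-c1} gives a uniform bound $|Du_r|\le\theta_Q<1$ on $Q$, so in particular $-\langle\nu_{u_r},E\rangle$ is bounded above on $Q$ uniformly in $r$; combined with Lemma \ref{loc-F-lem} this bounds $\Phi=\sigma_k^{1/k}(\kappa[\M_{u_r}])$ from above and below away from $0$ on $Q$, and then Lemma \ref{loc-c2-lem} yields a uniform bound on the largest principal curvature $\ka_{\max}$, hence a uniform $C^2$ bound on $u_r$ on (a slight shrinking of) $Q$.

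With uniform $C^2$ bounds in hand, the equation \eqref{flow1} becomes uniformly parabolic on $Q$ with the nonlinearity $(\sigma_k/C_n^k)^{1/k}$ smooth and concave (in the appropriate sense) on the relevant cone of matrices, so Krylov--Safonov and Evans--Krylov interior estimates upgrade the bound to $C^{2+\alpha,1+\alpha/2}$ locally, and then Schauder bootstrapping gives uniform $C^\infty$ bounds on every compact subset interior to $Q$. I would then extract, by Arzelà--Ascoli and a diagonal argument over an exhaustion of $\R^n\times[0,\infty)$, a subsequence $u_{r_i}$ converging in $C^\infty_{loc}$ to a function $u$ which is strictly spacelike, strictly convex (convexity passes to the limit and the Pogorelov bound of Lemma \ref{loc-c2-lem} keeps it from degenerating on compacts), and solves \eqref{flow1} on all of $\R^n\times[0,\infty)$ with $u(\cdot,0)=u_0$; the $C^0$ bounds force the asymptotic behavior $u(\cdot,t)-|x|\to\varphi(x/|x|)$ to be inherited from the barriers. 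Uniqueness of such an entire solution — which upgrades the subsequential convergence to full convergence $u_r\to u$ as $r\to 1$ — follows from the comparison principle for \eqref{ge} applied between any two entire solutions with the same initial data and matching asymptotics, exactly as in \cite{WX22-2}.

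The main obstacle is not the limiting machinery itself but ensuring that all the local estimates are genuinely uniform in $r$ as $r\to 1$: the constants in Lemmas \ref{sap-lem4.1}--\ref{sap-lem5.4} explicitly depend on $r$ and blow up as $r\to 1$, so one cannot use the boundary estimates directly. The point is that the \emph{interior} local estimates — Lemma \ref{loc-F-lem} and Lemma \ref{loc-c2-lem} — have constants depending only on the compact set (through $c$) and on $\M_0$, $\varphi$, $k$, but \emph{not} on $r$, precisely because they are proved by cutting off against the level sets of $u_r+t$ rather than against $\p B_r$; verifying this $r$-independence, and checking that the region $K$ in Lemma \ref{loc-F-lem} indeed eventually contains any prescribed compact set once $r$ is close to $1$, is the crux. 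Once that is in place the argument is the standard one of \cite{WX22-2}, so I would only indicate these points and refer to that paper for the routine details.
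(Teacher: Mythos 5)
Your proposal follows the intended route: use the interior (Pogorelov-type) estimates from Lemmas~\ref{loc-F-lem} and~\ref{loc-c2-lem}, which are $r$-independent because they are cut off against level sets of $u_r+t$ rather than against $\partial B_r$, to get uniform $C^2$ bounds on any compact $Q$ once $r>r_c(Q)$, then Evans--Krylov/Schauder and a diagonal Arzel\`a--Ascoli argument to pass to a subsequential limit, and finally an argument to identify the limit. You also correctly flag that the boundary estimates of Lemmas~\ref{sap-lem4.1}--\ref{sap-lem5.4} cannot be used directly since their constants blow up as $r\to 1$; recognizing that the whole point of the local estimates section is to circumvent this is the crux of the proof, and you got it.

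The one place where your proposal diverges from the cleanest version of the argument is the final step, where you upgrade subsequential to full convergence by invoking uniqueness of the entire solution to \eqref{ge} with prescribed initial data and asymptotics. That comparison principle on all of $\R^n$ requires a separate (noncompact) maximum-principle argument that the paper never states. The more elementary mechanism, and the one implicit in the approximate-problem setup of \eqref{ubr2}, is monotonicity of $u^*_r$ in $r$: for $r_1<r_2$, the $C^0$ estimate gives $u^*_{r_2}\ge u^*_0-at=u^*_{r_1}$ on $\partial B_{r_1}\times[0,T]$, so the parabolic comparison principle on the \emph{bounded} domain $B_{r_1}$ yields $u^*_{r_1}\le u^*_{r_2}$ in $B_{r_1}\times[0,T]$. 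Hence $u^*_r$ increases pointwise as $r\to 1$ and the limit exists without needing any uniqueness for the entire problem; the interior estimates then upgrade this to $C^\infty_{\mathrm{loc}}$ convergence of $u_r$. Either route is valid, but the monotonicity argument avoids the unproven entire-domain comparison and is what one should prefer here.
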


From Section 3 of \cite{WX22-1}, we observe that as $r \to 1$, $u_r^{\infty}$, which is the Legendre transform of $u_r^{\infty*}$, converges uniformly to $u^\infty$on any compact set $K \subset \mathbb{R}^n$. Moreover, $u^\infty$
satisfies

\[
\left\{
\begin{aligned}
\s_k^{\frac{1}{k}}(\ka[\M_{u^\infty}])&=-a(C_n^k)^{1/k}\lt<\nu_{u^\infty}, E\rt>,\\
u^\infty-|x|&\goto\varphi\lt(\frac{x}{|x|}\rt).
\end{aligned}
\right.
\]
We have completed the proof of Theorem \ref{theo1}.

\end{document}